\newtheorem{theorem}{Theorem}
\newtheorem{rmk}{Remark}
\newtheorem{lemma}{Lemma}
\newtheorem{prop}{Proposition}
\newtheorem{defi}{Definition}
\newcommand{\cH}{\mathcal{H}}
\newcommand{\CC}{\ensuremath{\mathbb{C}}}
\begin{document}

\title[Equivariant homology theory and twisted Yangian]{Equivariant homology theory and twisted Yangian}

\author[Z. Dong, H. Ma ]{Zhijie Dong, Haitao Ma }
\address{Institute for Advanced Study in Mathematics of HIT, Harbin, 150001, China}
\address{College of mathematics science, Harbin Engineering University, Harbin, 150001, China.}
\email{dongmouren@gmail.com (Zhijie Dong)}
\email{hmamath@hrbeu.edu.cn (Haitao Ma)}
\address{}
\thanks{}
\maketitle
\begin{abstract}
We study the convolution algebra $H^{G\times \CC^{*}}_{*}(Z)$ of $G$-equivariant homology group on the Steinberg variety of type B/C and define an algebra $\widetilde{Y}$ that maps to  $H^{G\times \CC^{*}}_{*}(Z)$.
The Drinfeld new realization of the twisted Yangian associated to symmetric pairs is a quotient of $\widetilde{Y}$.  
We also study the $G$-equivariant case
and prove that the twisted Yangian is the deformation of the twisted current algebra.
\end{abstract}

\section{Introduction}

In the paper \cite{kamnitzer2015highest}, it was proved that the ring of functions on the
$T$-fixed point subscheme of affine Grassmannian slice is isomorphic to the cohomology ring of quiver variety,\\
\begin{equation}
\mathcal{O}(\overline{Gr^{\lambda}_{\mu}}^{T})\cong H^{*}(\mathfrak{M}(V,W)). \label{1}
\end{equation}
This is a special case of Hikita conjecture which says that given an algebraic group $G$ and a representation $V$, the function ring on the $T$-fixed point subscheme of the Coulomb branch $\mathfrak{M}_{C}$ is isomorphic to the cohomology ring of the Higgs branch $\mathfrak{M}_{H}$.
The key point in the proof is that there exists an Yangian action on the homology groups $\bigoplus\limits_{V} H^{*}_{G(W)\times \CC^{*}}(\mathfrak{M}(V,W)$.
A special case of (1) is to take  $\mathfrak{M}(V,W)$ to be cotangent bundle of flag variety of type A.
This motivates us to study (1) for flag variety of other types. 
In this case, currently we do not know what is the left hand side but we can still study what is the algebra that acting on certain sum  of $H^{*}_{G\times \CC^{*}}(T^{*}(G/P))$ over $P$.

On the other hand,
 the right analog of other symmetric types to type A flag variety is the quiver variety discovered by Nakajima \cite{nakajima1994instantons}.
For symmetrizable types, it is still not clear what should be the geometric object playing the role of quiver variety
(see \cite{nakajima2019coulomb} for recent development).
Our work is somehow a drawback in this direction where we consider type B/C flag variety which does not correspond to type C/B Lie algebra.

In the paper \cite{fan2019equivariant},  the equivariant K-theory on the Steinberg variety of type B/C was studied.   There is a homomorphism from the $\imath$- quantum group associated to the symmetric pair  to the  equivariant K-group.
In this paper, we study the convolution algebra of the homology case. The main result is the following.
We construct a map from an algebra $\widetilde{Y}$ which we call pre-twisted Yangian to the $G$-equivariant convolution algebra of the  Steinberg variety. Let $\sigma$ be the involution of $\mathfrak{gl}_n$ induced by diagram automorphism of the corresponding Dynkin diagram. People studied the twisted polynomial current Lie algebra $\mathfrak{gl}_n[x]^{\sigma}$ and its 
deformation, which is called twisted Yangian. 
In our case, we prove that there exists a map 
$$ U(\mathfrak{sl}_{2n+1}[x]^{\theta})\xrightarrow[]{} H^{G_B}_{*}(Z_B)(\text{ \ resp. \ } H^{G_C}_{*}(Z_C) ).$$ for another involution $\theta$ of $\mathfrak{sl}_{2n+1}$. 
We expect that there exist twisted Yangians $Y^{B}_{\hbar}$ and $Y^{C}_{\hbar}$ such that$$
 \xymatrix{
 &Y^{B}_{\hbar} \ar[r]^{} \ar[dd]^{\hbar=0} & H^{G_{C}\times \CC^{*}}(Z_C) \ar[d]^{\hbar=0}\\
 && H^{G_{C}}(Z_C)\\
 \widetilde{Y}  \ar@{->>}[ruu]{}  \ar@{->>}[rdd]{} & U(\mathfrak{sl}_{2n+1}[x]^{\theta}) \ar[ru]{} \ar[rd]{} & \\
 && H^{G_{B}}(Z_B)\\
 &Y^{C}_{\hbar} \ar[r]^{} \ar[uu]^{\hbar=0} & H^{G_{B}\times \CC^{*}}(Z_B) \ar[u]^{\hbar=0}.\\
 }$$
We can not find all the relations of $Y^{B}_{h}$ and  $Y^{C}_{h}$. We denote the algebra that has the relations as many as possible in our capability by $\widetilde{Y}$ and call it pre-twisted Yangian.

The approach we use is completely standard as \cite{varagnolo2000quiver}.
Let me summarize it again here.
We use the action of $H^{G}_{*}(Z)$ on $ H^{G}(\bigsqcup G/P_{\underline{\nu}})$ (defined in section 4). We use localization theorem to reduce the computation of intersection of homology classes to Euler classes and intersection of point classes. We prove that the action is faithful so that we could check relations through it.

Lastly, we go back to our motivation and mention some future work.
Flag variety of type B/C has an realization as the core of $\sigma$-quiver variety \cite{li2019quiver} so we should study the Steinberg type variety of $\sigma$-quiver variety and expect that the algebra that maps to it is the same
as $Y_{\hbar}^{C}$ (resp. $Y_{\hbar}^{B}$). Also, we should try to understand if it could fit into the framework of Coulomb branch and the Hikita conjecture in this case.

The organization of the paper is as follows.
In the second section, we introduce the pre-twisted Yangian $\widetilde{Y}$.
In the third section, we  introduce some basic concepts of convolution algebra.
In the fourth section, we 
introduce type C flag variety and Steinberg variety.
In the fifth section, we define  generators.
In the sixth section, we compute the action on the zero fiber and prove our main theorem.
In the seventh section, we study type B case.
In the last section, we prove there are maps from $ U(\mathfrak{sl}_{2n+1}[x]^{\theta})$ to $H^{G_B}_{*}(Z_B)(\text{\ resp.\ } H^{G_C}_{*}(Z_C) ).$

\section{Pre-twisted Yangian $\widetilde{Y}$}
\begin{defi}
The pre-twisted Yangian $\widetilde{Y}$ is the associative $\mathbb{C}[\hbar]$-algebra generated by $\mathbf{e}_{i,r}$, $\mathbf{f}_{i,r}$,$ \mathbf{h}_{i,r}$($i \in [1,n], r \in \mathbb{N}$ ) with the following defining relations
  \allowdisplaybreaks
\begin{eqnarray*}
&&[\mathbf{h}_{i,r},\mathbf{h}_{j,s}] = 0,\\
&&[\mathbf{h}_{i,0},\mathbf{e}_{j,r}]=(2\delta_{i, j} -\delta_{i,j+1}- \delta_{i,j-1}  + \delta_{i,n}\delta_{j,n})\mathbf{e}_{j,r},\\
&&[\mathbf{h}_{i,0},\mathbf{f}_{j,r}]=(-2\delta_{i, j} +\delta_{i,j+1}+ \delta_{i,j-1}  - \delta_{i,n}\delta_{j,n})\mathbf{f}_{j,r}, \\
&&2[\mathbf{h}_{i,r+1},\mathbf{e}_{j,s}] - 2[\mathbf{h}_{i,r},\mathbf{e}_{j,s+1}] = \hbar (2\delta_{i, j} -\delta_{i,j+1}- \delta_{i,j-1})(\mathbf{h}_{i,r}\mathbf{e}_{j,s} +\mathbf{e}_{j,s}\mathbf{h}_{i,r} ), \ {\rm if} \ j \neq n,\\
(R1)  &&2[\mathbf{h}_{n,r},\mathbf{e}_{n,s+2}] - \hbar^2[\mathbf{h}_{n,r},\mathbf{e}_{n,s}] - 4[\mathbf{h}_{n,r+1},\mathbf{e}_{n,s+1}] + 2[\mathbf{h}_{n,r+2},\mathbf{e}_{n,s}] \\
  && \quad \quad \quad= \hbar(\mathbf{h}_{n,r+1}\mathbf{e}_{n,s} + \mathbf{e}_{n,s}\mathbf{h}_{n,r+1}) - \hbar(\mathbf{h}_{n,r}\mathbf{e}_{n,s+1} + \mathbf{e}_{n,s+1}\mathbf{h}_{n,r}),\\
&&2[\mathbf{h}_{i,r+1},\mathbf{f}_{j,s}] - 2[\mathbf{h}_{i,r},\mathbf{f}_{j,s+1}] = -\hbar (2\delta_{i, j} -\delta_{i,j+1}- \delta_{i,j-1})(\mathbf{h}_{i,r}\mathbf{f}_{j,s} +\mathbf{f}_{j,s}\mathbf{h}_{i,r} ),\ {\rm if} \ j \neq n,\\
(R2)&&2[\mathbf{h}_{n,r},\mathbf{f}_{n,s+2}] - \hbar^2[\mathbf{h}_{n,r},\mathbf{f}_{n,s}] - 4[\mathbf{h}_{n,r+1},\mathbf{f}_{n,s+1}] + 2[\mathbf{h}_{n,r+2},\mathbf{f}_{n,s}] \\
  && \quad \quad \quad = -(\hbar(\mathbf{h}_{n,r+1}\mathbf{e}_{f,s} + \mathbf{f}_{n,s}\mathbf{h}_{n,r+1}) - \hbar(\mathbf{h}_{n,r}\mathbf{f}_{n,s+1} + \mathbf{f}_{n,s+1}\mathbf{h}_{n,r})),\\
&&[\mathbf{e}_{i,r},\mathbf{f}_{j,s}] = \delta_{ij} \mathbf{h}_{r+s},{\rm if} \ i,j \neq n,\\
(R3)&&2[\mathbf{e}_{i,r+1},\mathbf{e}_{j,s}] - 2[\mathbf{e}_{i,r},\mathbf{e}_{j,s+1}] = \hbar(2\delta_{i, j} -\delta_{i,j+1}- \delta_{i,j-1})(\mathbf{e}_{i,r}\mathbf{e}_{j,s} + \mathbf{e}_{j,s} \mathbf{e}_{i,r}),\ \rm{if} \ (i,j) \neq (n, n),\\
(R4)&&2[\mathbf{f}_{i,r+1},\mathbf{f}_{j,s}] - 2[\mathbf{f}_{i,r},\mathbf{f}_{j,s+1}] = - \hbar(2\delta_{i, j} -\delta_{i,j+1}- \delta_{i,j-1})(\mathbf{f}_{i,r}\mathbf{f}_{j,s} + \mathbf{f}_{j,s} \mathbf{f}_{i,r}),  \\
&&\sum\limits_{w \in S_2}[\mathbf{e}_{i,r_{w_1}},[\mathbf{e}_{i,r_{w_2}}, \mathbf{e}_{j,s}]] = 0,\ \rm{if} \ |i - j| = 1,\\
&&\sum\limits_{w \in S_2}[\mathbf{f}_{i,r_{w_1}},[\mathbf{f}_{i,r_{w_2}}, \mathbf{f}_{j,s}]] = 0,\ \rm{if} \ |i - j| = 1,\\
(R5)&&[\mathbf{e}_{n,0},[\mathbf{e}_{n,0}, \mathbf{f}_{n,0}]] = -4\mathbf{e}_{n,0},\\
(R6)&&[\mathbf{f}_{n,0},[\mathbf{f}_{n,0}, \mathbf{e}_{n,0}]] = -4\mathbf{f}_{n,0}.
\end{eqnarray*}
\end{defi}
\section{Convolution algebra}
We use the notation $H_{*}$ for Borel-Moore homology
and when we say homology we mean Borel-Moore homology.
Denote by $H_{*}^{G}$ the $G$-equivariant homology.
We use $EG_{N}$ to approximate the classifying space $EG$ , where $G$ acts on $EG$ freely and  $EG$ contracts to a point. We define the $H^{G}(X)$ as $H(X\times_{G} EG_{N})$ for N sufficiently large keeping the codimension unchanged. Namely, $H^{G}_{*}(X)=H_{dim(EG_{N}-dimG+*)}(X\times_{G} EG_{N})$.
The definition is independent of the choice of $EG_{N}$ by standard argument.

We summarize how $H^{G}_{*}(Z)$ is endowed with an algebraic structure, see\cite{chriss2009representation} for detail.
We state the general setting for equivariant case.
Let $X_1, X_2, X_3$ be $G$-varieties.
Let $Z_{12}$ be $G$-invariant subvariety in $X_1\times X_2$ and $Z_{23}$ be $G$-invariant subvariety in $X_2\times X_3$.
Let $G$ act on the product of $X_i$ diagonally.
Denote the projection $X_i\times X_j \times X_k \times_{G} EG \xrightarrow{} X_i\times X_j \times_{G} EG$ by $\pi_{ij}$.
Given two homology classes $z_{12}\in H^{G}_{*}(Z_{12})$ and $z_{23}\in H^{G}_{*}(Z_{23})$, define the convolution product \\
$$z_{12}\star z_{23}=(\pi_{13})_{*}(\pi^{*}_{12}(z_{12})\cap \pi^{*}_{23}(z_{23})),$$
where lower star means pushforward which is defined for proper map and upper star means pullback which is defined for bundle map.
Define the set theoretic image of  $\pi^{*}_{12}(Z_{12})\cap \pi^{*}_{23}(Z_{23})$ under $\pi_{23}$  by $Z_{13}$.
We see that $z_{12}*z_{23}\in H^{G}(Z_{13})$ hence we have
$H^{G}_{*}(Z_{12})\times H^{G}_{*}(Z_{23})\xrightarrow{\star} H^{G}_{*}(Z_{13})$ and it can be shown that  this convolution product is associative.
Given a proper map $X\xrightarrow{\pi} Y$,
we set $X_1=X_2=X_3=X$ and $Z_{12}=Z_{23}=Z=X\times_{Y} X$.
We have $Z_{13}=Z$ and $H^{G}_{*}(Z)\times H^{G}_{*}(Z) \xrightarrow{\star} H^{G}_{*}(Z)$ so $H^{G}_{*}(Z)$ has an associative algebra structure.
Set $X_1=X_2=X, X_3=pt$ , $Z_{12}=Z=X\times_{Y} X$, $Z_{23}=X\times_{Y} pt$, where $pt$ maps to $y\in Y$ in the second map of the fiber product then $Z_{13}=Z_{23}=\pi^{-1}(y)$ and the convolution $H^{G}_{*}(Z)\times H^{G}_{*}(\pi^{-1}(y)) \xrightarrow{\star} H^{G}_{*}(\pi^{-1}(y))$ makes $H^{G}_{*}(\pi^{-1}(y))$ a $H^{G}_{*}(Z)$ module.

\section{ype C flag variety and Steinberg variety} \label{steinberg variety of type c}
Let $V$ be a $2d$ dimensional vector space  over complex numbers.
We fix a nondegenerate skew-symmetric bilinear form $(\  , \ )$. For a subspace $W\subset V$, let $W^{\perp}=\{x\in V\ |\ (x,w)=0\  \text{for\ any}\ w\in W\}$.
Denote by $Fl_{2n}$ the $2n$-step partial flag variety in $V$. A partial flag $fl=0=V_0\subset V_{1}\subset V_{2} \cdots \subset V_{n} \cdots \subset V_{2n}\subset V_{2n+1}=V$ is a nested sequence of subspaces in $V$ subject to the relations $V_{n+i}=V_{n-i+1}^{\perp}$ for $1\leq i \leq n$.
For a flag $fl$, let $\nu_{i}=dim V_{i}-dim V_{i-1}$ and $\underline{\nu}=(\nu_1,\cdots , \nu_{2n+1})
$ be the dimension vector. Note that the sequence $\nu_{1},\cdots ,\nu_{2n+1}$ forms a partition of $2d$ satisfying $\nu_{i}=\nu_{2n+2-i}$ for $1\leq i \leq n$. Denote by $Fl_{\underline{\nu}}$ the flag variety in $V$ of dimension vector $\underline{\nu}$.  Namely $Fl_{\underline{\nu}}$ consists of flag $0\subset V_{1}\subset V_{2} \cdots V_{n}\subset V_{n+1} \subset \cdots \subset V_{2n}\subset V$ such that $dim V_{i}-dim V_{i-1}=\nu_{i}$. Hence $Fl_{n}$ is the disjoint union of $Fl_{\underline{\nu}}$ over all dimension vectors $\underline{\nu}$. The cotangent bundle of $Fl_{\underline{\nu}}$ is the incident variety consisting of  all pairs $(fl,x)$ , where $x\in Sp(V)$ such that $x V_{i+1} \subset V_{i}$ for $1\leq i \leq 2n+1$. Let $\pi: X \xrightarrow{\pi} Y, \ (fl,x)\mapsto x.$
Let $G=Sp(V)$ and
 $\Tilde{G}=Sp(V)\times \CC^{*}$, where $\CC^{*}$ is the multiplicative group. 
The group $Sp(V)$ acts $V$ so induces a natural action on the flag variety and hence a natural action on its cotangent bundle $T^{*}Fl_{\underline{\nu}}$. Explicitly, 
$$(g,c)(0\subset V_1 \cdots \subset V),x)=(0\subset gV_1 \cdots \subset V, c^{2}gxg^{-1}).$$
Denote the fiber product of $X\times_{Y} X$ by $Z$, which we will refer to Steinberg variety of type C.
\section{Generators}
 We need some preparation before defining the generators.
 First we write $Z$ as disjoint union of irreducible varieties.
 Since $X=\sqcup_{\underline{\nu}} T^*Fl_{\underline{\nu}}$, we have
 $$ Z=X\times _{Y} X = \sqcup_{\underline{\nu}} T^*Fl_{\underline{\nu}}\times_{Y} \sqcup_{\underline{\nu}} T^*Fl_{\underline{\nu}}=\sqcup_{\underline{\nu},\underline{{\nu}^{\prime}}} T^*Fl_{\underline{\nu}} \times_{Y} T^*Fl_{\underline{\nu^{\prime}}}.$$
 We denote by $Z_{\underline{\nu},\underline{\nu}^{\prime}}$  the last term.
In term of incident variety, $Z_{\underline{\nu},\underline{\nu}^{\prime}}$ consists of triples ${(fl,fl',x)}$ such that
$fl \in Fl_{\underline{\nu}}$, $fl' \in Fl_{\underline{\nu'}}$, $x\in Sp(V)$,  $xV_{i+1}\subset V_{i}$ , $xV_{i+1}^{\prime}\subset V_{i}^{\prime}.$
Let $1_i=(0,\cdots,0,1,-1,\cdots,0)$ where $1$ is in the $i$-th position.
 we now identify the partial flag variety as homogeneous space of $Sp(V)$.  Choose a basis coordinate to the pairing $(\  , \ )$ such that $(e_i,e_{j})=\delta_{j-i,n}$ for $1\leq i\leq j \leq n$.
We fix a full base flag $fl^0=0\subset \langle e_1 \rangle \subset \langle e_1,e_2  \rangle \cdots \langle e_1,\cdots,e_n \rangle \subset \langle e_1,\cdots,e_{n+1} \rangle \subset \cdots 
 \subset V$.
For a dimension vector $\underline{\nu}$, denote by $fl^0_{\underline{\nu}}$ the partial flag by remembering the corresponding subspaces in the full base flag $fl^0$.
Denote the stabilizer of $fl^0_{\underline{\nu}}$ by $P_{\underline{\nu}}$ so we have
$Fl_{\underline{\nu}}\cong G/P_{\underline{\nu}}$.
Let $\pi: Z_{\nu,\nu^{\prime}}\xrightarrow{} Fl_{\underline{\nu}}\times Fl_{\underline{\nu^{\prime}}}$, $(fl,fl',x)\mapsto (fl,fl')$ and this is a bundle map.

By Bruhat decomposition, we have the following lemma.

\begin{lemma}\label{orbit sturcture of type c}

The Steinberg variety $Z_{\underline{\nu},\underline{\nu^{\prime}}}$ and $Fl_{\underline{\nu}}\times Fl_{\underline{\nu^{\prime}}}$ decompose into
G-orbits indexed by $w\in W_{P_{\underline{\nu}}} \backslash W/ W_{P_{\underline{\nu}}^{\prime}} $. Let $S_{w}=P_{\underline{\nu}}w P_{\underline{\nu^{\prime}}}/P_{\underline{\nu^{\prime}}}\subset G/P_{\underline{\nu}'}$ be the subvariety in the partial flag variety $G/P_{\underline{\nu}'}$ and
let $\Omega_w=\{(gP_{\underline{\nu}},gS_{w})|g\in G\} \subset Fl_{\underline{\nu}}\times Fl_{\underline{\nu^{\prime}}}.$
We have
$$Fl_{\underline{\nu}}\times  Fl_{\underline{\nu^{\prime}}}=
G/P_{\underline{\nu}}\times  G/P_{\underline{\nu^{\prime}}}=
\cup_{w\in  W_{P_{\nu}} \backslash W/ W_{P_{\nu}^{\prime}} }\Omega_w.$$
Denote $\pi^{-1}(\Omega_w)$ by $Z^{w}_{\underline{\nu},\underline{\nu^{\prime}}}$.
We have
$$ Z_{\nu,\nu^{\prime}}=\bigcup_{{w\in  W_{P_{\nu}}} \backslash W/ W_{P_{\nu}^{\prime}} }Z^{w}_{\underline{\nu},\underline{\nu^{\prime}}}.$$
\end{lemma}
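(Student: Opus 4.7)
The plan is to deduce the lemma from the Bruhat decomposition for $G=Sp(V)$ together with its parabolic refinement. First, I would recall that since $G$ is reductive and $P_{\underline{\nu}}, P_{\underline{\nu^{\prime}}}$ are parabolic subgroups (they are stabilizers of the isotropic partial flags built from the standard basis $e_1,\dots,e_{2n}$), one has
\[ G=\bigsqcup_{w\in W_{P_{\underline{\nu}}}\backslash W/W_{P_{\underline{\nu^{\prime}}}}} P_{\underline{\nu}}\, w\, P_{\underline{\nu^{\prime}}}, \]
which is a standard consequence of the full Bruhat decomposition $G=\bigsqcup_{w\in W}BwB$ obtained by grouping $B$-double cosets into $P$-double cosets.

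Next I would translate this into the double flag variety. Under the identifications $Fl_{\underline{\nu}}\cong G/P_{\underline{\nu}}$ and $Fl_{\underline{\nu^{\prime}}}\cong G/P_{\underline{\nu^{\prime}}}$, the diagonal $G$-action on $Fl_{\underline{\nu}}\times Fl_{\underline{\nu^{\prime}}}$ has orbits in natural bijection with $P_{\underline{\nu}}\backslash G/P_{\underline{\nu^{\prime}}}$: the assignment sending a double coset $P_{\underline{\nu}}\, w\, P_{\underline{\nu^{\prime}}}$ to the $G$-orbit through $(P_{\underline{\nu}}, wP_{\underline{\nu^{\prime}}})$ is a bijection by a direct transitivity argument, and the orbit through this pair is exactly $\Omega_w=\bigcup_{g\in G}\bigl(\{gP_{\underline{\nu}}\}\times gS_w\bigr)$ with $S_w=P_{\underline{\nu}}wP_{\underline{\nu^{\prime}}}/P_{\underline{\nu^{\prime}}}$, as in the statement.

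Finally I would lift this decomposition to the Steinberg piece via the bundle map $\pi:Z_{\underline{\nu},\underline{\nu^{\prime}}}\to Fl_{\underline{\nu}}\times Fl_{\underline{\nu^{\prime}}}$. Since $\pi$ is $G$-equivariant, setting $Z^w_{\underline{\nu},\underline{\nu^{\prime}}}=\pi^{-1}(\Omega_w)$ produces $G$-invariant pieces whose disjointness and covering property are inherited from the corresponding properties of the $\Omega_w$. This gives the claimed decomposition of $Z_{\underline{\nu},\underline{\nu^{\prime}}}$.

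There is no serious obstacle here — the lemma is essentially a repackaging of the parabolic Bruhat decomposition together with the $G$-equivariance of $\pi$. The only point that deserves care is verifying that $Sp(V)$ acts transitively on $Fl_{\underline{\nu}}$ for our specific "half-isotropic" dimension vectors, which follows from Witt's extension theorem applied to the nondegenerate skew-symmetric form; this legitimizes the identification $Fl_{\underline{\nu}}\cong G/P_{\underline{\nu}}$ used throughout.
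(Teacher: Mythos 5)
Your argument is correct and follows exactly the route the paper intends: the paper offers no proof beyond the phrase ``By Bruhat decomposition,'' and your write-up simply fills in the standard details (parabolic refinement of the Bruhat decomposition, the bijection between $P_{\underline{\nu}}\backslash G/P_{\underline{\nu^{\prime}}}$ and diagonal $G$-orbits on the double flag variety, and the lift along the $G$-equivariant map $\pi$). The added remark on transitivity via Witt's theorem is a sensible precaution but not a departure from the paper's approach.
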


\begin{lemma}
Let $e$ be the identity element $e\in W$.
 The orbit $Z^{e}_{\underline{\nu},\underline{\nu}+1_i}$ (resp.  $Z^{e}_{\underline{\nu},\underline{\nu}-1_i}$) is the incident variety of triples $(fl,fl',x)$ in $Z_{\underline{\nu},\underline{\nu}+1_i}$ (resp.  $Z_{\underline{\nu},\underline{\nu}-1_i}$) with extra conditions $dim(V'_j/V_j)=\delta_{ij}$ (resp. $dim(V_j/V'_j)=\delta_{ij}$ ) for $1\leq j \leq 2n+1$.
 The image 
 $\pi(Z_{\underline{\nu},\underline{\nu}+1_i})$ (resp.  $\pi(Z_{\underline{\nu},\underline{\nu}-1_i})$)  consists of all the  pairs $(fl_1,fl_2)$ with extra conditions $dim(V'_j/V_j)=\delta_{ij}$ (resp. $dim(V_j/V'_j)=\delta_{ij}$ ) for $1\leq j \leq 2n+1$, which is $G/P_{\underline{\nu}}\cap P_{ \underline{\nu}+1_i}$

\end{lemma}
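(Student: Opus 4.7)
The plan is to identify the orbit $\Omega_{e}:=\pi(Z^{e}_{\underline{\nu},\underline{\nu}+1_{i}})$ concretely via a base-point computation, and then to deduce the claim about $Z^{e}_{\underline{\nu},\underline{\nu}+1_{i}}$ from the equality $Z^{e}_{\underline{\nu},\underline{\nu}+1_{i}}=\pi^{-1}(\Omega_{e})$, which holds because $\pi$ is a $G$-equivariant bundle map and $\Omega_{e}$ is the $G$-orbit under $\pi$ attached to the identity double coset $e\in W_{P_{\underline{\nu}}}\backslash W/W_{P_{\underline{\nu}+1_{i}}}$.

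First I would take as base point the pair $(fl^{0}_{\underline{\nu}},fl^{0}_{\underline{\nu}+1_{i}})$ of standard flags obtained by selecting the appropriate subspaces of the fixed full flag $fl^{0}$. By construction these two flags agree at every index except $i$, where the second contains the first with codimension one, so the dimension condition $\dim(V'_{j}/V_{j})=\delta_{ij}$ is visibly satisfied at this base pair. Identifying $Fl_{\underline{\nu}}=G/P_{\underline{\nu}}$ and $Fl_{\underline{\nu}+1_{i}}=G/P_{\underline{\nu}+1_{i}}$, this pair corresponds to $(eP_{\underline{\nu}},eP_{\underline{\nu}+1_{i}})$, which lies in $\Omega_{e}$ since $S_{e}=P_{\underline{\nu}}P_{\underline{\nu}+1_{i}}/P_{\underline{\nu}+1_{i}}$ contains the identity coset.

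Next I would compute the diagonal $G$-stabilizer of this base pair, which is by construction the intersection $P_{\underline{\nu}}\cap P_{\underline{\nu}+1_{i}}$; this yields $\Omega_{e}\cong G/(P_{\underline{\nu}}\cap P_{\underline{\nu}+1_{i}})$, and by $G$-invariance of the dimension condition every pair in $\Omega_{e}$ satisfies it. For the reverse inclusion I would show that any $(fl,fl')$ with $\dim(V'_{j}/V_{j})=\delta_{ij}$ is $G$-conjugate to the base pair: the nested structure forces $fl$ and $fl'$ to lie simultaneously inside a common full symplectic flag (obtained by suitable refinement), and the transitivity of the $Sp(V)$-action on full symplectic flags sends this common refinement to $fl^{0}$, carrying $(fl,fl')$ to the base pair and therefore into $\Omega_{e}$.

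Pulling the description of $\Omega_{e}$ back along $\pi$ gives the first assertion, namely that $Z^{e}_{\underline{\nu},\underline{\nu}+1_{i}}$ is the subvariety of $Z_{\underline{\nu},\underline{\nu}+1_{i}}$ cut out by the dimension condition on the flag components; the stabilizer calculation then supplies the second assertion. The $\underline{\nu}-1_{i}$ case is handled symmetrically by interchanging the roles of $fl$ and $fl'$. The main delicacy, in my view, is the verification that the abstract $e$-double coset in $W_{P_{\underline{\nu}}}\backslash W/W_{P_{\underline{\nu}+1_{i}}}$ really captures the geometric "single codimension-one jump at index $i$" condition: this requires careful bookkeeping of how the perpendicularity constraint $V_{n+j}=V_{n-j+1}^{\perp}$ propagates a change at position $i$ through the rest of the flag, and in particular ensures that the symmetric version of $\underline{\nu}+1_{i}$ is indeed the correct dimension vector on the target side.
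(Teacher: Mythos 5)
Your proposal is correct and follows essentially the same route as the paper: the paper's entire proof is the one-line observation that the base pair $(fl^{0}_{\underline{\nu}},fl^{0}_{\underline{\nu}+1_{i}})$ satisfies the dimension condition and that the $G$-action preserves it. You go further by also supplying the reverse inclusion (common refinement plus transitivity of $Sp(V)$ on full isotropic flags) and by flagging the bookkeeping forced by $V_{n+j}=V_{n-j+1}^{\perp}$, both of which the paper leaves implicit.
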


\begin{proof}
The two base flags of dimension $\underline{\nu}$ and $\underline{\nu}+1_i$ satisfy the extra condition and the group action preserves it.
\end{proof}

Let $\overline{\nu_i}=\Sigma_{1\leq j\leq i} \nu_j$. We have a natural line bundle $L^{+}_i$ (resp. $L^{-}_i$)
where the fiber over $(fl_1,fl_2,x)$ is the line $V_i^{\prime}/V_i$ (resp $V_i/V_i^{\prime}$) . Denote the first $G$-equivariant Chern class of the line bundle by $c_1(L^{+}_{i})\in H^{*}_{G}(Z^{e}_{\underline{\nu},\underline{\nu}+1_{i}})$. (resp. $c_1(L^{-}_{i})\in H^{*}_{G} (Z^{e}_{\underline{\nu},\underline{\nu}-1_{i}})$)
We understand cohomology class as an operator acting on homology class by cap product and use $\bullet$ for this action.
We can now define generators $\mathcal{E}_{i,r}$ and $\mathcal{F}_{i,r}$.

$$  \mathcal{E}_{i,r} = \sum\limits_{\underline{\nu}} (-1)^{\nu_{i+1}+\delta_{n,i}}c_1(L^{+}_i)\bullet[Z^{e}_{\underline{\nu},\underline{\nu}+1_{i}}]$$ 
$$  \mathcal{F}_{i,r} = \sum\limits_{\underline{\nu}} (-1)^{\nu_i}c_1(L^{-}_i)\bullet[Z^{e}_{\underline{\nu},\underline{\nu}-1_{i}}].$$
In order to define the other generators we need to prepare more notations. Let $L(q^{m})\in K^{G}(Z^{e}_{\underline{\nu},{\underline{\nu}}})$ be the trivial line bundle with $c\in \CC^{*}$ acting by scaling by $c^m$.
Note that $Z^{e}_{\underline{\nu},{\underline{\nu}}}$ is a vector bundle over the diagonal part of  $Fl_{\underline{\nu}}\times Fl_{\underline{\nu}}$.
Denote by $\mathcal{V}_{i}$ the tautological bundle over $Z^{e}_{\underline{\nu},{\underline{\nu}}}$ where the fiber over $(fl,fl,x)$ is $V_i$ for $1\leq i\leq n$. 
Let $F_{i,\underline{\nu}}$ in the $G$-equivariant $K$-group be $-(1+q^{-2})V_{i}+q^{-1}(V_{i+1}+V_{i-1}).$ Denote by $\lambda$ the $G$-equivariant Chern polynomial map from the $G$-equivariant $K$-group $K^{G}(X)$ to
$H^{G}_{*}(X)[z]$, the polynomial ring with coefficients in the $G$-equivariant homology group.
Let $\hbar=c_1(L(q^2))$.
We now define the $\mathcal{H}_{i,r,\underline{\nu}}$ to be the coefficient of $\hbar z^{-r-1}$ in the formal expansion of
$$ \frac{\lambda_{-1/z}(F_{i,\underline{\nu}})}{\lambda_{-1/z}(q^2 F_{i,\underline{\nu}})},$$ 
and  $\mathcal{H}_{i,r}=\sum_{\overline{\nu} }\mathcal{H}_{i,r,\underline{\nu}}$. 

\section{The action on the zero fiber}
we will apply localization theory.

Let $T$ be a torus acting on a variety $X$ with isolated fixed points $X^{T}=\sqcup x_i$. The embedding $i$ of $X^{T}$ to $X$ induces a map on $T$-equivariant homology group\\
$$ H^{T}(X^{T})\xrightarrow{i_{*}}H^{T}(X).$$
The map   $X\times_{T} ET\xrightarrow{} ET/T$
defines $H^{T}_{*}(X)$ (resp. $H^{*}_{T}(X)$)  as $H^{*}_{T}(pt)$ module by pulling back and take cup (resp. cap) product.
\begin{lemma}
Denote by $S_{T}$ the symmetric algebra of characters of $X^{*}(T)\otimes \CC$ , where $X^{*}(T)$ is the character lattice of $T$. We have\\
$$S_{T} \cong H^{*}_{T}(pt).$$
as vector space isomorphism with  addition mapping to multiplication and vice versa.
\end{lemma}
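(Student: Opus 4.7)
The plan is to identify $H^{*}_{T}(pt)$ with the cohomology of the classifying space $BT$ and then to compute the latter explicitly using the torus structure. By the definition of equivariant cohomology recalled earlier, $H^{*}_{T}(pt) = H^{*}(ET_{N}/T) = H^{*}(BT_{N})$ for $N$ sufficiently large, and in any fixed degree this stabilizes to $H^{*}(BT)$, so the whole question reduces to computing $H^{*}(BT)$.

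Next I would choose an isomorphism $T \cong (\CC^{*})^{n}$; then $BT$ is homotopy equivalent to $(\CC P^{\infty})^{n}$, and the K\"unneth formula combined with the standard calculation $H^{*}(\CC P^{\infty};\CC) = \CC[x]$ gives $H^{*}(BT) \cong \CC[x_{1},\dots,x_{n}]$, concentrated in even degrees.

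To exhibit the isomorphism with $S_{T}$, for each character $\chi\in X^{*}(T)$ form the associated line bundle $L_{\chi} = ET\times_{T} \CC_{\chi}$ over $BT$ and set $\chi \mapsto c_{1}(L_{\chi})\in H^{2}(BT)$. Since $L_{\chi+\chi'} = L_{\chi}\otimes L_{\chi'}$ and $c_{1}$ is additive under tensor product of line bundles, this is additive in $\chi$ (which is the content of the paper's phrase "addition mapping to multiplication and vice versa" once one views characters multiplicatively). Extending $\CC$-linearly yields a map $X^{*}(T)\otimes_{\ZZ} \CC \to H^{2}(BT)$, and since $H^{*}(BT)$ is graded commutative, the universal property of the symmetric algebra produces an algebra homomorphism $S_{T} \to H^{*}(BT)$.

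Finally, to verify bijectivity, take the basis $\{e_{i}\}$ of $X^{*}(T)$ given by the factor projections $(\CC^{*})^{n}\to \CC^{*}$. Then $L_{e_{i}}$ is the pullback of the tautological line bundle on the $i$-th factor $\CC P^{\infty}$, so $c_{1}(L_{e_{i}}) = x_{i}$, and the map sends the polynomial generators $e_{1},\dots,e_{n}$ of $S_{T}$ to the generators $x_{1},\dots,x_{n}$ of $\CC[x_{1},\dots,x_{n}]$; hence it is an isomorphism. The only technical subtlety, and the main thing to check carefully, is compatibility with the finite-dimensional approximation: one must confirm that $L_{\chi}$ descends to a genuine line bundle on each $BT_{N}$ and that the stabilization map $H^{2}(BT_{N})\to H^{2}(BT)$ is an isomorphism once $N\geq 1$, so that the approximation scheme used in the paper's definition of equivariant cohomology produces the same Chern classes as $BT$ itself.
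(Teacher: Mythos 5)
Your proposal follows essentially the same route as the paper: both send a character $\chi$ to the first Chern class of the associated line bundle $ET\times_{T}\CC_{\chi}$ over $BT$ and use additivity of $c_{1}$ under tensor product to match the multiplicative/additive structures. The only difference is that you go on to verify bijectivity explicitly via $BT\simeq(\CC P^{\infty})^{n}$ and the K\"unneth formula, a step the paper's proof leaves implicit, so your argument is if anything more complete.
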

\begin{proof}
Given a character $\chi: T\xrightarrow{} \CC^{*}$, we have a associated line bundle $$
\CC \times_{T} ET \xrightarrow{} ET/T$$
where $T$ acts on $\CC$ by multiplying $\chi(t)$, for $t\in T$.
Let $c$ be the first Chern class of this line bundle. By property of Chern class, we see that product of characters becomes addition in cohomology classes.
For sum of characters, we define $c$ to be the top Chern class of the direct sum of corresponding line bundles and this extends to $S_(T)$.

\end{proof}

Let $Fr_{T}$ be the fraction field of $S_{T}$.
Let $\cH^{T}(\bullet)=H^{T}(\bullet)\otimes_{S_{T}} Fr_{T}$.
We have $H^{T}_{*}(X^T)=\oplus_{x_i} H^{T}_{*}(x_i)=\oplus_{x_i} S_T$ and  denoted by  $(s_i)_{x_i}, s_i\in S_T$ an element in it,  similarly for $\cH^{T}_{*}(X^T)$.
We still denote by $i_{*}$ the pushforward between localized homology groups.
Denote by $[\bullet]^{A}$ the $A$-equivariant fundamental class, where $A$ is $T$ or $G$ in this paper. We often omit $A$ when it is clear from the context.
\begin{theorem}\cite[Theorem 13,Corollary 15]{brion1998equivariant}
The $S_T$ linear map 
$$i_{*}: H^{T}_{*}(X^{T})\xrightarrow[]{} H^{T}_{*}(X)$$
becomes an isomorphism after inverting finitely many non trivial characters. Hence, the map $i_{*}:\cH^{T}_{*}(X^{T})\xrightarrow{} \cH^{T}_{*}(X)$
is an isomorphism. Let $Eu_x(X)$ be the $T$-equivariant Euler class at point $x\in X^T$. Then under the above isomorphism, the preimage of $[X]$ is $$ (\frac{1}{Eu_x(X)})_{{x\in X^{T}}}.$$ Moreover\footnote{In order to do convolution product, we have to use $i_{*}$ again.},
$$[X]=i_{*}i_{*}^{-1}([X])=
i_{*}((\frac{1}{Eu_x{X}})_{x\in X^{T}})
=\Sigma_{x\in X^{T}}\frac{1}{Eu_x{X}}[x].
$$
\end{theorem}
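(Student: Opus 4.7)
The plan is to adapt the classical Atiyah--Bott localization argument to equivariant Borel--Moore homology, following Brion. Everything reduces to one vanishing principle: if $T$ acts on a variety $Y$ without fixed points, then $H^T_*(Y)$ is a torsion $S_T$-module, annihilated by a product of finitely many nontrivial characters. Granting this, applied to $Y = X \setminus X^T$, the long exact sequence for the closed inclusion $X^T \hookrightarrow X$ in equivariant Borel--Moore homology,
$$\cdots \to H^T_*(X^T) \xrightarrow{i_*} H^T_*(X) \to H^T_*(X \setminus X^T) \to H^T_{*-1}(X^T) \to \cdots ,$$
collapses after inverting these characters into an isomorphism between $\cH^T_*(X^T)$ and $\cH^T_*(X)$. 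This yields the first two assertions of the theorem at once.

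First I would establish the vanishing principle by stratifying $Y$ according to $T$-orbit type. On each stratum $Y_H$, the orbits are $T/H$ for some fixed proper closed subgroup $H \subsetneq T$, and the $S_T$-module structure on $H^T_*(Y_H)$ factors through the restriction $S_T \to S_H$; since $H$ is proper, this restriction has a nontrivial character in its kernel, which annihilates $H^T_*(Y_H)$. A finite induction over the stratification, using the open--closed long exact sequence at each step, propagates the torsion annihilation from the strata to all of $Y$, using only finitely many nontrivial characters.

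Next I would compute the preimage $i_*^{-1}[X]$ locally at each fixed point $x$. Because the ambient $X$ in the paper is a disjoint union of cotangent bundles of partial flag varieties it is smooth, and a formal (hence localized-equivariant) isomorphism between a $T$-neighborhood of $x$ and the tangent representation $V := T_xX$ is available. Additivity of $[X]$ under the $T$-invariant open cover by such neighborhoods reduces the problem to the linear case $V$ with unique fixed point $0$. For such a $V$, the equivariant Thom/self-intersection identity gives $i_*[\{0\}] = Eu_x(X) \cap [V]$, where $Eu_x(X)$ is the product of the $T$-weights on $V$ and becomes invertible in the localization. Rearranging yields $i_*^{-1}[V] = \frac{1}{Eu_x(X)}[\{0\}]$, and summation over $x \in X^T$ produces the stated formula.

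The main obstacle is this passage to the linear model at a fixed point. The torsion vanishing is essentially formal once the orbit-type stratification is in place, but justifying that a $T$-neighborhood of $x$ is equivariantly, or at least localized-equivariantly, isomorphic to $T_xX$ requires an equivariant slice theorem, an equivariant Bialynicki--Birula decomposition, or a comparison of formal and honest equivariant homology. In the smooth setting here these tools are all available, but each demands separate verification, which is why one invokes Brion as a black box rather than reproving it in detail.
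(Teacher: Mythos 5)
The paper offers no proof of this statement: it is quoted verbatim from Brion \cite{brion1998equivariant} (his Theorem 13 and Corollary 15) and used as a black box, so there is no in-paper argument to compare yours against. That said, your outline is essentially the standard Atiyah--Bott/Brion localization argument and is correct in its first half: the reduction to the vanishing principle via the open--closed long exact sequence in Borel--Moore homology is exactly Brion's strategy, and the orbit-type stratification does yield the torsion statement (with the minor caveat that for strata with \emph{finite} stabilizers the annihilator is not a character in the kernel of $S_T\to S_H$ but a sufficiently high power of any nontrivial character, since such modules are bounded and hence killed by a power of the augmentation ideal; with field coefficients this causes no trouble). For the second half, your route through an equivariant linearization of a neighborhood of each fixed point is more delicate than necessary, as you yourself note: the standard and cleaner argument writes $i_*^{-1}[X]=\sum_x a_x[x]$ and applies the Gysin pullback $i_x^!$ for the regular embedding $\{x\}\hookrightarrow X$ to both sides, using the self-intersection formula $i_x^!\,i_{x,*}[x]=Eu_x(X)\cdot[x]$ together with $i_x^![X]=[x]$ and the vanishing of $i_x^!\,i_{x',*}$ for $x'\neq x$; this needs only that $X$ be smooth along $X^T$ (true here, since $X$ is a union of cotangent bundles of flag varieties) and entirely avoids the slice-theorem issues you flag as the main obstacle. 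So your proposal is a correct sketch, but its weakest step is one the standard proof simply does not need.
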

\begin{rmk}
  We just rewrite the fundamental class in the larger space $\cH^{T}(X)$ as sum of point classes with coefficients in $Fr_{T}$. Denote  $r([X])= \Sigma_{x\in X^{T}} \frac{1}{Eu(x,X)}[x] $.
\end{rmk}
To apply the above theorem we have to
relate $T$-equivariant homology with $G$-equivariant homology.
\begin{theorem} [\cite{brion1998equivariant}, proposition 1 (i)]
Let $X$ be a $G$-space. Let $T$ be the Cartan of $G$ with normalizer $N$ and Weyl group $W=N/T$.
Then $W$ acts on $H^{*}_{T}(X)$ and we have isomorphism\\
$$H^{*}_{T}(X)^{W}\cong H^{*}_{G}(X).$$
\end{theorem}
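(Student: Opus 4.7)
The plan is to prove this via the standard $G/T$-fibration argument combined with Borel's computation of $H^*(G/T)$ as a $W$-module.

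First I would consider the natural projection
$$\pi: EG \times_T X \longrightarrow EG \times_G X,$$
obtained by enlarging the quotient from $T$ to $G$. Since $T \subset G$ is a closed subgroup and $G$ acts freely on $EG$, this is a locally trivial fiber bundle whose fiber over a point is the homogeneous space $G/T$. Because $G/T$ is a smooth projective variety whose rational cohomology is concentrated in even degrees and is free of finite rank over $\mathbb{C}$, the Leray--Serre spectral sequence for $\pi$ collapses at $E_2$, and by Leray--Hirsch one obtains a (non-canonical) graded $H^*_G(X)$-module isomorphism
$$H^*_T(X) \;\cong\; H^*_G(X) \otimes_{\mathbb{C}} H^*(G/T).$$

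Next I would track the action of $W = N/T$. The normalizer $N$ acts on $EG$ by the restricted right multiplication, and the induced action on $EG/T = BT$ descends to a $W$-action. This action commutes with $\pi$ (since $N$ lies inside $G$, so the base $EG \times_G X$ is untouched), which means $W$ acts on $H^*_T(X)$ fixing the subalgebra $\pi^*H^*_G(X)$ pointwise, and restricts on each fiber to the usual Weyl group action on $H^*(G/T)$. Under the Leray--Hirsch splitting above, this identifies the $W$-action with $\mathrm{id} \otimes (\text{Weyl action on } H^*(G/T))$.

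Finally I would invoke Borel's classical theorem that $H^*(G/T)^W = \mathbb{C}$ concentrated in degree zero, coming from the identification $H^*(G/T) \cong \mathbb{C}[\mathfrak{t}^*]/(\mathbb{C}[\mathfrak{t}^*]^W_+)$ as $W$-representations; taking $W$-invariants of the tensor decomposition then yields
$$H^*_T(X)^W \;\cong\; H^*_G(X) \otimes H^*(G/T)^W \;\cong\; H^*_G(X),$$
which is the claimed isomorphism. The main subtlety I expect is to verify carefully that the Leray--Hirsch splitting can be chosen $W$-equivariantly, or equivalently that the $W$-action on the $E_\infty$-page preserves the horizontal filtration so that one may factor out $H^*_G(X)$ cleanly; this is ultimately handled by checking that $\pi$ itself is $W$-equivariant with trivial $W$-action on the target.
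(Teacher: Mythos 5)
The paper does not actually prove this statement; it is quoted verbatim from Brion's notes (\cite{brion1998equivariant}, Proposition 1(i)), and only the injectivity of $i_{T,G}^{*}$ is extracted from that proof in the subsequent remark. Your argument is a correct and standard alternative, but it is not the route Brion takes: he inserts the intermediate space $EG\times_{N}X$, uses the transfer isomorphism $H^{*}(EG\times_{N}X;\mathbb{Q})\cong H^{*}_{T}(X)^{W}$ for the finite covering $EG\times_{T}X\to EG\times_{N}X$, and then observes that $EG\times_{N}X\to EG\times_{G}X$ is a fibration with rationally acyclic fiber $G/N$ (since $H^{*}(G/N;\mathbb{Q})=H^{*}(G/T;\mathbb{Q})^{W}=\mathbb{Q}$). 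That ordering of the two quotients is precisely what lets him avoid the issue you flag at the end, namely whether the Leray--Hirsch splitting of the $G/T$-fibration can be made $W$-equivariant. Your route buys a concrete module decomposition $H^{*}_{T}(X)\cong H^{*}_{G}(X)\otimes H^{*}(G/T)$; Brion's buys a cleaner invariants step.

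Two points in your write-up deserve tightening. First, evenness of $H^{*}(G/T)$ alone does not force the Leray--Serre spectral sequence to collapse (odd differentials such as $d_{3}\colon E_{3}^{p,2}\to E_{3}^{p+3,0}$ are not excluded by parity of the fiber degree); the collapse comes from the Leray--Hirsch hypothesis itself, i.e.\ from Borel's theorem that $H^{*}(BT)\to H^{*}(G/T)$ is surjective, so that classes pulled back from $BT$ to $EG\times_{T}X$ restrict to a basis of the fiber cohomology. One should also note that the local system is trivial because the fibration is pulled back from $BT\to BG$ with $BG$ simply connected for connected $G$ (a hypothesis that is genuinely needed, and which is strained in the paper's type $B$ setting where $G_{B}=O(V)\times\CC^{*}$ is disconnected). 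Second, for the invariants step you do not need a $W$-equivariant splitting: the Leray filtration on $H^{*}_{T}(X)$ is natural, hence $W$-stable, the functor $(-)^{W}$ is exact over $\CC$ for finite $W$, and $(E_{\infty}^{p,q})^{W}=H^{p}_{G}(X)\otimes H^{q}(G/T)^{W}$ vanishes for $q>0$; chasing the filtration then shows $H^{*}_{T}(X)^{W}$ equals the image of $\pi^{*}$, which is $H^{*}_{G}(X)$. With these repairs your proof is complete.
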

\begin{rmk}
Denote the map
$EG \times_{T} X \xrightarrow{} EG \times_{G} X$ by $i_{T,G}$. From the proof of the above theorem we see that $H^{*}_{G}(X)\xrightarrow{i^{*}_{T,G}} H^{*}_{T}(X)$ is injective. In particular, when $X$ is a point, $H^{*}_{T}=S_T$, the Weyl group $W$ acts on $T$ and induces an action on $S_{T}$.
\end{rmk}
We have the following commuting diagram
\begin{align}
\xymatrix{
H^{G}_{*}(Z)\times H^{G}_{*}(Fl_{n})\ar[d]^{i^*_{T,G}\times i^*_{T,G}} \ar[r]^{\star} &H^{G}_{*}(Fl_{n})\ar[d]^{i^*_{T,G}}\\
H^{T}_{*}(Z)\times H^{T}_{*}(Fl_{n})\ar[r]^{\star}\ar[d]^{r\times r} &H^{T}_{*}(Fl_{n}) \ar[d]^{r}\\
\cH^{T}_{*}(Z)\times \cH^{T}_{*}(Fl_{n})\ar[r]^{\star} &\cH^{T}_{*}(Fl_{n}).}
\end{align}

In order to compute the first horizontal map, we use the vertical map to reduce the computation to the bottom map so we have to understand to vertical map.
By the expression of $\mathcal{E}_{ir}$ we only have to compute on each component $Z^{e}_{\underline{\nu},\underline{\nu}+1_{i}}$.
The case of $\mathcal{F}_{ir}$ is similar so we focus on  $\mathcal{E}_{ir}$ case afterwards.
The action is zero if the class is not in the components  $Fl_{\underline{\nu}+1_{i}}$ and $\mathcal{E}_{ir}
 (H^{G}_{*}(Fl_{\underline{\nu}+1_{i}})) \subset H^{G}_{*}(Fl_{\underline{\nu}})$.
We could rewrite as follows
$$
\xymatrix{
H^{G}_{*}(Z^{e}_{\underline{\nu},\underline{\nu}+1_{i}})\times H^{G}_{*}(Fl_{\underline{\nu}+1_{i}})\ar[d]^{i^*_{T,G}\times i^*_{T,G}} \ar[r]^-{\star}  &H^{G}_{*}(Fl_{\underline{\nu}})\ar[d]^{i^*_{T,G}}\\
H^{T}_{*}(Z^{e}_{\underline{\nu},\underline{\nu}+1_{i}})\times H^{T}_{*}(Fl_{\underline{\nu}+1_{i}})\ar[r]^-\star\ar[d]^{r} &H^{T}_{*}(Fl_{\underline{\nu}}) \ar[d]^{r}\\
\cH^{T}_{*}(Z^{e}_{\underline{\nu},\underline{\nu}+1_{i}})\times \cH^{T}_{*}(Fl_{\underline{\nu}+1_{i}})\ar[r]^-\star &\cH^{T}_{*}(Fl_{\underline{\nu}}). \\
}
$$


We denote the map $H^{*}_{T}(X) \xrightarrow{l} H^{*}_{T}(X^T)$,  where $X=Z^{e}_{\underline{\nu},\underline{\nu}+1_{i}}, Fl_{\underline{\nu}}$.
We have $$H^{*}_{G}(X)=H^{*}(pt/P)=H^{*}(pt/P)=S_{T}^{W_P},$$
$$H^{*}_{T}(X^T)=H^{*}_{T}(\sqcup_{w\in W/W_P} p_{w})=\oplus_{w\in W/W_P} H^{*}_{T}(p_w)=\oplus_{w\in W/W_P} (S_{T})_{w}.$$
\begin{lemma}
Take $f\in H^{*}_{G}(X)$,
We have $(l\circ i^{*}_{T,G})f= (w(f))_{w\in W/W_P},$ where $H^{*}_{G}(X)=S_{T}^{W_P}\subset S_{T}$ and the action of $W_P$ on it is explained in remark 2.
\end{lemma}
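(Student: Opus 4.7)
The plan is to identify both sides using classifying spaces and compute the pullback at each $T$-fixed point directly. I begin by taking a common model $EG=ET$, so the standard identification reads $H^{*}_{G}(G/P)=H^{*}(EG\times_{G}G/P)=H^{*}(EG/P)=H^{*}(BP)=S_{T}^{W_{P}}$, which is the isomorphism implicitly used in the statement.

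For each coset $w\in W/W_{P}$, the point $p_{w}=wP\in G/P$ is $T$-fixed because $T\subset wPw^{-1}$, so the inclusion $\iota_{w}\colon\mathrm{pt}\hookrightarrow G/P$, $*\mapsto wP$, is $T$-equivariant. The $w$-component of $l\circ i^{*}_{T,G}$ is, by definition, the composite $\iota_{w}^{*}\circ i^{*}_{T,G}\colon H^{*}_{G}(G/P)\to H^{*}_{T}(\mathrm{pt})=S_{T}$. On classifying spaces this composite is induced by the map $BT\to BP$ sending $[e]_{T}\mapsto[e,wP]_{G}$. Using the standard relation $(eg,x)\sim(e,g\cdot x)$ in the Borel mixing construction, I rewrite $[e,wP]_{G}=[ew,P]_{G}\in EG/P=BP$.

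This exhibits the map as the composite $BT\xrightarrow{R_{w}}BT\xrightarrow{B\iota}BP$, where $R_{w}$ descends right multiplication by a representative of $w$ in $N(T)$ on $EG$ (which descends because $wTw^{-1}=T$), and $B\iota$ is induced by the inclusion $T\hookrightarrow P$. On cohomology, $(B\iota)^{*}$ is precisely the natural inclusion $S_{T}^{W_{P}}\hookrightarrow S_{T}$, and $R_{w}^{*}$ acts on $H^{*}(BT)=S_{T}$ as the Weyl group element $w$ described in Remark 2, since $R_{w}$ intertwines the right $T$-action via the conjugation $t\mapsto w^{-1}tw$. Composing, $\iota_{w}^{*}\circ i^{*}_{T,G}(f)=w(f)$, which assembled over $w\in W/W_{P}$ yields the stated formula.

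The main bookkeeping hazard will be tracking the side on which $w$ acts and the resulting direction of the Weyl action ($w$ versus $w^{-1}$); this is resolved by being careful with the conventions for the Borel construction, as above. Note that although $w$ is only defined modulo $W_{P}$ and only up to a representative in $N(T)$, the class $w(f)\in S_{T}$ is well-defined precisely because $f\in S_{T}^{W_{P}}$, so every ambiguity in the choice of representative disappears and the output is consistent.
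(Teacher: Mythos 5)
Your argument is correct, and it rests on the same underlying mechanism as the paper's proof --- namely that restricting to the fixed point $wP$ twists the $T$-action by conjugation by $w$ --- but the packaging is genuinely different and in one respect stronger. The paper reduces to the case $f=c(\chi)$ for a character $\chi$ of $P$ and then computes the $T$-action on the fibre of the associated line bundle $G\times_P L$ over the fixed point $g_wP$, finding the character $w(\chi)$; strictly speaking this only treats the degree-two part of $H^{*}_{G}(G/P)=S_T^{W_P}$ (plus whatever is generated from it over $H^*_G(\mathrm{pt})$), since a general element of $S_T^{W_P}$ need not be a polynomial in first Chern classes of equivariant line bundles on $G/P$. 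You instead identify the $w$-component of $l\circ i^{*}_{T,G}$ with the map on cohomology induced by the space-level composite $BT\xrightarrow{R_w}BT\xrightarrow{B\iota}BP$, so the conclusion $\iota_w^{*}\circ i^{*}_{T,G}(f)=w(f)$ holds for arbitrary $f\in S_T^{W_P}$ with no reduction to generators. Your closing remarks on the $w$ versus $w^{-1}$ convention (which the paper relegates to a footnote) and on why $w(f)$ is independent of the representative of the coset $wW_P$ are both correct and tighten up points the paper leaves implicit. The only cost of your route is that it leans on the standard identification of $R_w^{*}$ with the Weyl action on $H^{*}(BT)=S_T$, which is exactly the line-bundle computation the paper carries out explicitly; so the two proofs are complementary rather than independent.
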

\begin{proof}
We could assume $f=c(\chi)$ , where $\chi: T\xrightarrow{} \CC$ is a character of $T$ and let 
$G\times_P L$ is the associated line bundle of it.
We have the diagram
$$
\xymatrix{
EG\times_{T} (g_w \times_{P} L) \ar[r] \ar[d] & EG \times_{T}(G\times_{P} L)  \ar[r] \ar[d] &  EG \times_{G} (G\times_{P} L) \ar[d]\\
EG \times_{T} p_{w}\ar[r] &EG\times_{T} (G/P) \ar[r] &   EG\times_{G}(G/P)},
$$
where $g_w\in G$ is a lift of $p_w$.
We need to know how $T$ acts on $g_w \times_{P} L$.
The action of $t\in T$ on an element $(\overline{g_w,l})\in g_w \times_{P} L$ is given by $t(\overline{g_w,l})=(\overline{tg_w,l})=(\overline{g_wt^{g_w},l})=(\overline{w,t^{g_w}l})$, where $t^{g_w}=g_w^{-1}tg_w$.
This means $L\times_{T} EG$ is the line bundle
associated to the character\footnote{Here the action of $w$ is the inverse of conjugation but we will  use conjugation later since we will take sum over $W_P$ later so it does not matter} $w(\chi)$.
\end{proof}

We can now compute the composition of $r\circ i^{*}_{T,G}$ for any
class $f\in H^{G}_{*}(X)$.
\begin{theorem}
For $f\in H^{G}_{*}(X)$, we have
$$(r\circ i^{*}_{T,G})f= \Sigma_{w\in X^{T}} w(f) (Eu(w,X))^{-1} [x_w].$$
\end{theorem}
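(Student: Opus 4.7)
The plan is to combine Lemma~3 with the localization theorem quoted earlier, mediated by Poincar\'e duality on the smooth variety $X$. Both candidates for $X$ (namely $Fl_{\underline{\nu}}$ and $Z^e_{\underline{\nu},\underline{\nu}+1_i}$, the latter being a vector bundle over the $G$-orbit $\Omega_e$ in a product of flag varieties) are smooth, so capping with the fundamental class gives a canonical isomorphism $H^*_G(X)\xrightarrow{\sim} H^G_*(X)$. Writing $f=\alpha\cap[X]$ for a unique $\alpha\in H^*_G(X)=S_T^{W_P}$, one simultaneously explains the meaning of the symbol $w(f)$ appearing in the statement, since $W_P$ fixes $\alpha$ and thus $w(\alpha)$ depends only on the coset $w\in W/W_P=X^T$.

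Next I push $f$ through $i^*_{T,G}$. The pullback commutes with cap product against a cohomology class, and the $G$-equivariant fundamental class $[X]$ pulls back to the $T$-equivariant fundamental class, so
$$
i^*_{T,G}(f)=i^*_{T,G}(\alpha)\cap [X]\in H^T_*(X).
$$
Then I apply the localization map $r$. The preceding theorem gives $r([X])=\sum_{x\in X^T}\tfrac{1}{Eu(x,X)}[x]$, and cap product distributes over this expression (cap by a cohomology class restricted to a fixed point is just scalar multiplication by the value at that point), so
$$
r\circ i^*_{T,G}(f)=\sum_{x\in X^T}\frac{\,i^*_{T,G}(\alpha)\big|_{x}\,}{Eu(x,X)}\,[x].
$$

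The last ingredient is the evaluation of $i^*_{T,G}(\alpha)$ at each fixed point. That is exactly the content of Lemma~3: the composite $l\circ i^*_{T,G}$ sends $\alpha\in S_T^{W_P}$ to the tuple $(w(\alpha))_{w\in W/W_P}$, so $i^*_{T,G}(\alpha)|_{x_w}=w(\alpha)=w(f)$. Substituting yields the formula claimed.

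The main obstacle is really only bookkeeping: one must verify that the pullback $i^*_{T,G}$ and the localization isomorphism $r$ are both compatible with cap product, and that the Poincar\'e duality identification $H^G_*(X)\cong H^*_G(X)$ intertwines the Weyl group action implicit in ``$w(f)$'' with the action on $S_T^{W_P}$ supplied by Remark~2. These compatibilities are entirely standard for equivariant Borel--Moore homology of smooth varieties, so once they are recorded the theorem reduces to a direct composition of Lemma~3 with the localization formula.
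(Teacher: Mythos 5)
Your proposal is correct and follows essentially the same route as the paper: Poincar\'e duality to write the class as $\alpha\cap[X]$, compatibility of $i^*_{T,G}$ and $r$ with the cap product (the paper's commutative diagram), the fixed-point evaluation lemma giving $(w(\alpha))_{w\in W/W_P}$, and the localization formula $r([X])=\sum_x Eu(x,X)^{-1}[x]$. The only discrepancy is a label: the fixed-point evaluation is the paper's Lemma~4, not Lemma~3.
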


\begin{proof}

Using the following commutative diagram,
$$
\xymatrix{
H^{*}_{G}(X)\times H^{G}_{*}(X)\ar[d]^{i^{*}_{T.G}\times i^{*}_{T.G}} \ar[r]^-\cap &H^{G}_{*}(X)\ar[d]^{i^{*}_{T.G}}\\
H^{*}_{T}(X)\times H^{T}_{*}(X)\ar[r]^-\cap \ar[d]^{l\times r} &H^{T}_{*}(X) \ar[d]^r \\
H^{*}_{T}(X^T)\times \cH^{T}_{*}(X)\ar[r]^-\cap  &\cH^{T}_{*}(X),  }
$$
 where the second factor we plug in the fundamental class of $X$.
 We can reduce the computation of the right hand side to the left hand side since
any class in $H^{G}_{*}(X)$ can be written as $f[X]$ where $f\in H^{*}_{G}(X)$ by Poincare duality.

Then the theorem follows from Lemma 4 and Theorem 1.

\end{proof}

 To compute the convolution product of two point classes, we need equivariant clean intersection formula.
 \begin{lemma}\cite{przezdziecki2015geometric}
 Let $X$ be a smooth $G$-variety and $Y_1,Y_2$ be two close $G$-subvarieties.
 Let $Y$ be the set-theoretic intersection of $Y_1$ and $Y_2$.
 Let $\mathcal{N}$ be the bundle
 $$\mathcal{N}=TX|_{Y}/(TX|_{Y_1}\oplus TX|_{Y_2}),$$ then we have
 $$[Y_1]\cap [Y_2]= eu_{G}(\mathcal{N})[Y].$$
 
 \end{lemma}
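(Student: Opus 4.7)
The lemma is the standard clean (excess) intersection formula in equivariant Borel--Moore homology, with excess bundle $\mathcal{N}$ (which I read as $TX|_Y/(TY_1|_Y+TY_2|_Y)$, of rank $\mathrm{codim}\,Y_1+\mathrm{codim}\,Y_2-\mathrm{codim}\,Y$; the expression in the statement appears to be a typographical shorthand, since the literal sum $TX|_{Y_1}\oplus TX|_{Y_2}$ does not live over $Y$). My plan is to reduce to the non-equivariant case and then apply deformation to the normal cone.

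First I would pass from $H^G_\ast$ to ordinary Borel--Moore homology via the Borel construction $X\times_G EG_N$ of Section~3 for $N$ sufficiently large: the $G$-invariant pieces $Y,Y_1,Y_2$ all descend to closed subvarieties of $X\times_G EG_N$ whose clean intersection geometry is inherited, and $\mathcal{N}$ descends to the analogous vector bundle on $Y\times_G EG_N$. Since the Borel construction converts $G$-equivariant Euler classes into ordinary Euler classes of the associated bundles, it is enough to prove the non-equivariant statement.

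Next, I would deform to the normal cone of $Y$ in $X$: consider $M\to\mathbb{A}^1$ with $M|_{t\ne 0}\cong X$ and $M|_{0}\cong N_{Y/X}$. Cleanness forces $Y_1,Y_2$ to be smooth along $Y$, so they deform flatly to subvarieties $\widetilde{Y}_1,\widetilde{Y}_2\subset M$ whose special fibres are precisely the subbundles $N_{Y/Y_1},N_{Y/Y_2}\subset N_{Y/X}$. Using that the specialisation map in Borel--Moore homology is a ring homomorphism with respect to intersection, the identity $[Y_1]\cap[Y_2]=[N_{Y/Y_1}]\cap[N_{Y/Y_2}]$ holds inside the total space of $N_{Y/X}$, where the latter intersection is the linearised problem.

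Finally, in a vector bundle $E\to Y$ the intersection of two subbundles $E_1,E_2$ with $E_1\cap E_2=Y$ (the zero section) is computed via the Thom isomorphism: each $[E_i]$ corresponds, under the Thom class of $E/E_i$, to $1\in H^\ast(Y)$, and a diagram chase gives $[E_1]\cap[E_2]=e\bigl(E/(E_1+E_2)\bigr)\cdot[Y]$. Since $E/(E_1+E_2)\cong \mathcal{N}$ by the clean-intersection definition of $\mathcal{N}$, this produces the claimed formula. The main technical obstacle is the middle step: justifying that specialisation commutes with intersection at the level of equivariant Borel--Moore homology, and verifying that the strict transforms $\widetilde{Y}_i$ are genuinely flat over $\mathbb{A}^1$ with the asserted special fibres rather than carrying extra embedded components that would contribute spurious excess.
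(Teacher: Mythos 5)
The paper gives no proof of this lemma at all: it is quoted verbatim from \cite{przezdziecki2015geometric} and used as a black box (its only role here is to feed into the computation of $[Z_{12}]^{T}\star[Z_{23}]^{T}$ in the following theorem). So there is nothing internal to compare against; what you have written is an actual proof sketch where the paper supplies only a citation. Your reading of the hypotheses is the right one: ``close'' must mean closed smooth subvarieties meeting \emph{cleanly} (i.e.\ $Y$ smooth with $TY=TY_1|_Y\cap TY_2|_Y$), and the displayed $\mathcal{N}$ only makes sense as the excess bundle $TX|_Y/(TY_1|_Y+TY_2|_Y)$ --- this is confirmed by how the paper uses it in the next proof, where $\mathcal{N}$ is computed as $T_x(M_1\times M_2\times M_3)/(T_x(M_1)\oplus T_x(M_3))$, i.e.\ the tangent spaces of the subvarieties, not of the ambient space restricted to them. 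Your three-step argument (Borel construction to kill equivariance, deformation to the normal cone with specialisation compatible with the refined intersection product, then the linearised subbundle computation via Thom classes) is the standard route and is sound; the points you flag as needing care (flatness of the strict transforms with special fibres $N_{Y/Y_i}$, and compatibility of specialisation with $\cap$) are exactly the ones handled by Fulton's deformation-to-the-normal-cone machinery once $Y$, $Y_1$, $Y_2$ are smooth, so I see no genuine gap. One small wording correction: cleanness does not \emph{force} $Y_1,Y_2$ to be smooth along $Y$; smoothness is part of the hypothesis you must impose (and without it the formula can fail), so state it as an assumption rather than a consequence.
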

 Now we can compute the convolution product of two point classes.
 \begin{theorem}\cite{przezdziecki2015geometric}
Let $Z_{12}=(a_1,a_2)\in M_1\times M_2$ and $Z_{23}=(a_2,a_3)$ in $M_2\times M_3$.  Then we have $[Z_{12}]^{T}\star [Z_{23}]^{T}=eu_{a_2,M_2} [Z_{13}]^{T} $, where $Z_{13}=(a_1,a_3)$.
 \end{theorem}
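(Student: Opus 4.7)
The plan is to unwind the definition of the convolution product
\[[Z_{12}]^T\star [Z_{23}]^T=(\pi_{13})_{*}\bigl(\pi_{12}^{*}[Z_{12}]^T\cap \pi_{23}^{*}[Z_{23}]^T\bigr)\]
inside the smooth ambient space $M_1\times M_2\times M_3$, and then apply the clean intersection formula of the preceding lemma. The statement is essentially a bookkeeping exercise once each ingredient is identified.

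First I would describe the two pullbacks. Since each $\pi_{ij}$ is a smooth projection (with fiber the omitted factor $M_k$), the pullback of the fundamental class of a closed $T$-invariant subvariety equals the fundamental class of its set-theoretic preimage; hence
\[\pi_{12}^{*}[Z_{12}]^T=[\{a_1\}\times\{a_2\}\times M_3]^T,\qquad \pi_{23}^{*}[Z_{23}]^T=[M_1\times\{a_2\}\times\{a_3\}]^T.\]
The set-theoretic intersection of these two smooth closed $T$-subvarieties in $M_1\times M_2\times M_3$ is the single $T$-fixed point $Y=\{(a_1,a_2,a_3)\}$. Applying the previous lemma, I need to identify the excess bundle
\[\mathcal{N}=T_{(a_1,a_2,a_3)}(M_1\times M_2\times M_3)\big/\bigl(T_{a_3}M_3\oplus T_{a_1}M_1\bigr),\]
which is canonically isomorphic as a $T$-representation to $T_{a_2}M_2$. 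The lemma then yields
\[\pi_{12}^{*}[Z_{12}]^T\cap \pi_{23}^{*}[Z_{23}]^T=eu_{a_2,M_2}\cdot[(a_1,a_2,a_3)]^T.\]

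Finally, $\pi_{13}$ restricts to an isomorphism of the zero-dimensional subvariety $\{(a_1,a_2,a_3)\}$ onto $\{(a_1,a_3)\}=Z_{13}$, so $(\pi_{13})_{*}[(a_1,a_2,a_3)]^T=[Z_{13}]^T$; multiplying by the (scalar) Euler class gives the claim. The only point demanding care is cleanness of the intersection, so that the lemma applies; but this is automatic here because the two subvarieties are smooth, they meet at a single point, and the tangent subspaces $T_{a_1}M_1$ and $T_{a_3}M_3$ sit in the ambient tangent space as complementary $T$-summands to $T_{a_2}M_2$. I do not expect any serious obstacle beyond this identification of the excess normal bundle.
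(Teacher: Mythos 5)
Your proposal is correct and follows essentially the same route as the paper's own proof: unwind the convolution, identify the two pullbacks as the preimages $\{a_1\}\times\{a_2\}\times M_3$ and $M_1\times\{a_2\}\times\{a_3\}$, apply the clean intersection lemma with excess bundle $T(M_1\times M_2\times M_3)/(T_{a_1}M_1\oplus T_{a_3}M_3)\cong T_{a_2}M_2$, and push forward along $\pi_{13}$. No substantive differences.
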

 \begin{proof}
 We repeat the proof for convenience of the readers.
 By definition,
 $$[Z_{12}]^{T}\star [Z_{23}]^{T}=(p_{13})_{*}(p_{12}^{-1}(a_1,a_2)\cap p_{23}^{-1}(a_2,a_3))=$$$$(p_{13})_{*}
 \{(a_1,a_2,x_3)|x_3\in M_3)\cap (x_1,a_2,a_3)|x_1\in M_1 \}=$$$$
 (p_{13})_{*}(Eu_{T}(x,\mathcal{N})[(a_1,a_2,a_3)]=
 (p_{13})_{*}(Eu_{T}(x,T_x(M_2))[(a_1,a_2,a_3)]=
 (Eu_{T}(x,T_x(M_2))[(a_1,a_3)]$$
 where
 $x=(a_1,a_2,a_3)$ and the last equality uses
 $$Eu(x,\mathcal{N})=T_x(M_1\times M_2\times M_3)/T_x(M_1)\oplus T_x( M_3)=T_x(M_2).$$

 \end{proof}
\begin{theorem}
 The action of $H^{G}_{*}(Z)$ on $H^{G}_{*}(Fl_{2n})$ is faithful. Namely, the map $H^{G}_{*}(Z)\xrightarrow[]{\rho} End(H^{G}_{*}(Fl_{2n}))$ given by convolution is injective.
 \end{theorem}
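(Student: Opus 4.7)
The plan is to reduce, via the commutative diagram introduced earlier in this section, to an explicit computation in localized $T$-equivariant homology and then read off vanishing in a point-class basis. Suppose $z\in H^{G}_{*}(Z)$ satisfies $z\star c=0$ for every $c\in H^{G}_{*}(Fl_{2n})$, and let $\widetilde{z}$ denote its image in $\cH^{T}_{*}(Z)$ obtained by composing the pullback $i^{*}_{T,G}$ with the localization map $r$. Both maps are injective on the equivariant homology of $Z$: the first by the $W$-invariants description used in Remark~2, and the second because $H^{T}_{*}(Z)$ is a free $S_{T}$-module coming from a $T$-stable affine paving of $Z$ refining the Bruhat stratification in Lemma~\ref{orbit sturcture of type c}. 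It therefore suffices to prove $\widetilde{z}=0$.

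By the localization isomorphism, $\cH^{T}_{*}(Z)=\bigoplus_{(p,p')\in Z^{T}} Fr_{T}\cdot[(p,p')]$, where $Z^{T}$ consists of pairs of $T$-fixed flags $(fl,fl')$ together with $x=0$: a $T$-fixed element of $\mathfrak{sp}(V)$ lies in the Cartan, stabilises every subspace of a $T$-fixed flag, and therefore cannot satisfy the strict shift $xV_{i+1}\subset V_{i}$ unless it vanishes. Write $\widetilde{z}=\sum c_{p,p'}[(p,p')]$ with $c_{p,p'}\in Fr_{T}$, and similarly $\cH^{T}_{*}(Fl_{2n})=\bigoplus_{p}Fr_{T}\cdot[p]$. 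The convolution rule for point classes (the final theorem of the preceding subsection, combined with the clean intersection formula) specialises to
\[
[(p,p')]\star[q]=\delta_{p',q}\,Eu(q,X)\,[p],
\]
where $Eu(q,X)\in Fr_{T}$ is the nonzero product of the tangent and cotangent weights of $X$ at $q$. Hence if $\widetilde{z}\star[q]=0$ for every $T$-fixed $q$, linear independence of the $[p]$ together with the invertibility of $Eu(q,X)$ forces $c_{p,q}=0$ for all $p,q$, so $\widetilde{z}=0$.

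The remaining task is to show that the hypothesis $z\star c=0$ for all $c\in H^{G}_{*}(Fl_{2n})$ already propagates to $\widetilde{z}\star[q]=0$ for every $T$-fixed $q$. For this it is enough that the image of $H^{G}_{*}(Fl_{2n})$ in $\cH^{T}_{*}(Fl_{2n})$ spans the target over $Fr_{T}$: each $[q]$ is then a $Fr_{T}$-combination of images of $G$-equivariant classes, and naturality of the convolution in the commutative diagram yields $\widetilde{z}\star[q]=0$. I would establish the spanning statement using the equivariant fundamental classes $[\overline{\Omega}_{w}]^{G}\in H^{G}_{*}(Fl_{\underline{\nu}})$ of Schubert varieties, whose localisations at $T$-fixed points are upper-triangular in Bruhat order with invertible diagonal entries (inverse equivariant Euler classes of the open Schubert cells), so that after inverting $S_{T}$ they form a $Fr_{T}$-basis of $\cH^{T}_{*}(Fl_{\underline{\nu}})$. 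The main technical obstacle is precisely this triangularity/spanning claim uniformly over parabolics $P_{\underline{\nu}}$ in $Sp(V)$; I would handle it by pulling back along the standard projection $G/B\to G/P_{\underline{\nu}}$ to reduce to the classical statement for the full flag variety. Everything else is a direct consequence of the localization theorem and the point-class convolution rule already established.
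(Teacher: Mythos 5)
Your argument follows essentially the same route as the paper: reduce via the commutative diagram to the localized algebra $\cH^{T}_{*}(Z)$, expand in the point-class basis, and use the point-class convolution formula $[(p,p')]\star[q]=\delta_{p',q}\,Eu(q,X)\,[p]$ with invertible Euler classes to force all coefficients to vanish. The proposal is correct; it merely makes explicit two steps the paper asserts without comment (injectivity of $r\circ i^{*}_{T,G}$ and the $Fr_{T}$-spanning of $\cH^{T}_{*}(Fl_{2n})$ by the image of $H^{G}_{*}(Fl_{2n})$).
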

 \begin{proof}
 By diagram (3),
 we have the commutative diagram
 $$
 \xymatrix{
H^{G}_{*}(Z) \ar[d]^{r\circ i^*_{T,G}} \ar[r]^{\rho} &End(H^{G}_{*}(Fl_{n,d}))\ar[d]^{r\circ i^*_{T,G}\times r\circ i^*_{T,G}}\\
\cH^{T}_{*}(Z)\ar[r]^{\rho^{T}} &End(\cH^{T}_{*}(Fl_{n,d})).}$$
Since $r\circ i^*_{T,G}$ is injective, we only need to show that $\rho^{T}$ is injective.
Suppose there is an nonzero element $x\in \cH^{T}_{*}(Z)$ such that $\rho^{T}(x)=0$.
Since $Z=\bigsqcup Z_{\underline{\nu},\underline{\nu^{\prime}}}$, we have $\cH^{T}_{*}(Z)=\oplus \cH^{T}_{*}(Z_{\underline{\nu},\underline{\nu^{\prime}}})$, $x=(x_{\underline{\nu},\underline{\nu^{\prime}}})$.
So there exists $\underline{\nu},\underline{\nu^{\prime}}$ such that $x_{\underline{\nu},\underline{\nu^{\prime}}}\neq 0$.
Write $x_{\underline{\nu},\underline{\nu^{\prime}}}=\sum a_{w,w^{\prime}} p_{w,w^{\prime}}$. 
If for some $w,w^{\prime}$, $ a_{w,w^{\prime}}\neq 0$, take $p^{\prime}\in \cH^{T}(Fl_{2n})$,
 then the convolution product $a_{w,w^{\prime}} p_{w,w^{\prime}} \star p_{w^{\prime}}=a_{w,w^{\prime}} \Lambda_{w,w^{\prime}}p_w$. This term can not be eliminated since   
 $p_{w_1,w_1^{\prime}} \star p_{w^{\prime}}=0$ for $w_1^{\prime}\neq w^{\prime}$, $p_{w_1,w^{\prime}} \star p_{w^{\prime}}=\Lambda_{w_1,w^{\prime}} p_{w_1}$ and $\{p_{w}, w\in W \}$ are linearly independent in $\cH^{T}(Fl_{2n})$. So $a_{w,w^{\prime}}=0$ and $x$ must be $0$.
 \end{proof}

 Now we can state our main theorem of this section.
 Let $x_i$ be the characters of $T\subset Sp(V)$
coordinate to our chosen basis ${e_i}$ of $V$ so we have $S_{T}=k[x_1,\cdots,x_d]$.
 \begin{theorem} \label{action of type c}
 Fix a  partition $\underline{\nu}$.

 (a) For any $ 1 \leq i \leq n$ ,  $f \in  H_{*}^G(T^*Fl_{\underline{\nu} - 1_{i} })$, we have
 \begin{equation*}
\begin{split}
 \mathcal{F}_{i,r}(f)
  =  W_{P_{\underline{\nu}}}/(W_{P_{\underline{\nu}}} \cap W_{P_{\underline{\nu}-1_i}}) (\prod_{\bar{\nu}_{i-1} < t < \bar{\nu}_{i}}  (1 + \frac{ \hbar}{ x_{\bar{\nu}_i} - x_t})x_{\bar{\nu}_i}^r\cdot f).
 \end{split}
 \end{equation*}

 (b) If $ 1 \leq i < n$ ,  for any  $f \in  H_{*}^G(T^*Fl_{\underline{\nu} + 1_{i} })$, we have
 \begin{equation*}
\begin{split}
 \mathcal{E}_{i,r}(f)
  =   W_{P_{\underline{\nu}}}/(W_{P_{\underline{\nu}}} \cap W_{P_{\underline{\nu}+1_i}}) (\prod_{\bar{\nu}_{i} + 1 < t \leq  \bar{\nu}_{i+1}} (1 + \frac{ \hbar}{ x_{\bar{\nu}_i+1} - x_t})x_{\bar{\nu}_i+1}^r\cdot f).
 \end{split}
 \end{equation*}

 (c)  If $i = n$,  for any  $f \in  H_{*}^G(T^*Fl_{\underline{\nu} + 1_{n} })$  , we have
 \begin{equation*}
\begin{split}
 & \mathcal{E}_{i,n}(f)\\
  = &    W_{P_{\underline{\nu}}}/(W_{P_{\underline{\nu}}} \cap W_{P_{\underline{\nu}+1_n}})\left(\prod_{\bar{\nu}_{n} + 1 < t \leq  d}
  (1 + \frac{ \hbar}{ x_{\bar{\nu}_n+1} - x_t}) (1 + \frac{ \hbar}{ x_{\bar{\nu}_n+1} + x_t})
  (1 + \frac{ \hbar}{ 2x_{\bar{\nu}_n+1}})x_{\bar{\nu}_n+1}^r \cdot f\right).
 \end{split}
 \end{equation*}
 \end{theorem}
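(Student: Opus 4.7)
The plan is to apply the localization strategy: using the commutative diagram (3), it suffices to verify the stated formulas after pulling back through the injective map $r \circ i^*_{T,G}$ to $\cH^{T}_{*}(Fl_{\underline{\nu}})$. The right-hand side is described by Theorem 3, so the task reduces to evaluating the localized convolution $r([c_1(L^{\pm}_i)^r \bullet Z^e_{\underline{\nu},\underline{\nu} \pm 1_i}]) \star r(f)$, which by Theorems 1 and 4 is a sum of point-class convolutions weighted by ratios of equivariant Euler classes.

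First I would enumerate the $T$-fixed points. Those of $Fl_{\underline{\nu}}$ are indexed by cosets in $W/W_{P_{\underline{\nu}}}$; the fixed point $p_w$ is the flag whose $j$-th subspace is spanned by $\{e_{w(1)},\ldots,e_{w(\bar{\nu}_j)}\}$, subject to the isotropy condition $V_{n+i}=V_{n-i+1}^{\perp}$. The $T$-fixed points of $Z^e_{\underline{\nu},\underline{\nu}+1_i}$ are pairs $(p_w,p_{w'})$ where $p_{w'}$ extends the $i$-th subspace of $p_w$ by a single basis vector of weight $w(x_{\bar{\nu}_i+1})$, with the forced symmetric modification at the mirror position. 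By Theorem 1 the fundamental class $[Z^e_{\underline{\nu},\underline{\nu}+1_i}]$ localizes to a sum of $1/Eu(\cdot,Z^e)$, and by Theorem 3 the cohomology class $c_1(L^+_i)^r$ evaluates at $(p_w,p_{w'})$ to $w(x_{\bar{\nu}_i+1})^r$.

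Next I would invoke Theorem 4 for the convolution of point classes: the product absorbs the Euler class of the intermediate factor $T_{p_{w'}}(T^*Fl_{\underline{\nu}+1_i})$, leaving only the Euler class of the normal bundle $\mathcal{N}$ of $Z^e$ inside $T^*Fl_{\underline{\nu}} \times T^*Fl_{\underline{\nu}+1_i}$. For $i<n$, the horizontal tangent weights contributing to $\mathcal{N}$ are exactly $x_{\bar{\nu}_i+1}-x_t$ with $\bar{\nu}_i+1 < t \leq \bar{\nu}_{i+1}$; the corresponding cotangent directions, twisted by the $\CC^{*}$-action that defines $\hbar=c_1(L(q^2))$, contribute $x_{\bar{\nu}_i+1}-x_t-\hbar$. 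Taking ratios yields the product $\prod (1+\hbar/(x_{\bar{\nu}_i+1}-x_t))$ in parts (a) and (b), and the signs $(-1)^{\nu_i}$ and $(-1)^{\nu_{i+1}+\delta_{n,i}}$ in the definitions of $\mathcal{F}_{i,r}$ and $\mathcal{E}_{i,r}$ are precisely what is needed to absorb the ordering signs in $Eu(\mathcal{N})$. Summing over the $T$-fixed points in the preimage of the open orbit $\Omega_e$ corresponds to summing over a $W_{P_{\underline{\nu}}}$-orbit of labels; since $f$ already represents a $W_{P_{\underline{\nu}\pm 1_i}}$-invariant class, the sum collapses to the coset averaging $W_{P_{\underline{\nu}}}/(W_{P_{\underline{\nu}}} \cap W_{P_{\underline{\nu}\pm 1_i}})$ appearing in the formulas.

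The main obstacle, and the source of the distinguished statement in part (c), is the symplectic self-pairing at the last node. When $i=n$, enlarging $V_n$ by a vector of weight $x_{\bar{\nu}_n+1}$ forces a symmetric enlargement of $V_{n+1}=V_n^{\perp}$; the normal bundle $\mathcal{N}$ at $(p_w,p_{w'})$ therefore receives, on top of the weights $x_{\bar{\nu}_n+1}-x_t$ already seen in the usual computation, the pair-weights $x_{\bar{\nu}_n+1}+x_t$ coming from the isotropic pairings with the other basis vectors, and a self-paired weight $2x_{\bar{\nu}_n+1}$ from the symplectic form on the inserted vector with itself. The matching cotangent partners provide the $\hbar$-shifts, producing the three factors $(1+\hbar/(x_{\bar{\nu}_n+1}-x_t))$, $(1+\hbar/(x_{\bar{\nu}_n+1}+x_t))$, and $(1+\hbar/(2x_{\bar{\nu}_n+1}))$ of the statement. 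The delicate step will be the tangent-weight bookkeeping at a $T$-fixed point of $T^*(Sp(V)/P)$, which I would execute on the Lie-algebra level via the root-space decomposition of $\mathfrak{sp}(V)$ with respect to the parabolic $\mathfrak{p}_{\underline{\nu}}$, identifying exactly which weights of the type A picture survive, double, or get replaced by their negatives under the $\mathrm{Sp}$-condition.
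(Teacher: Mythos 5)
Your proposal follows essentially the same route as the paper's proof: localize via the injectivity of $r\circ i^{*}_{T,G}$, express the fundamental class and $f$ as sums over $T$-fixed points weighted by inverse Euler classes (Theorems 1 and 3), absorb the intermediate Euler class via the point-class convolution formula (Theorem 4), and reduce the remaining ratio of Euler classes to a root-space computation for $\mathfrak{sp}(V)$ relative to the parabolics, which for $i=n$ produces exactly the extra factors $(1+\hbar/(x_{\bar{\nu}_n+1}+x_t))$ and $(1+\hbar/(2x_{\bar{\nu}_n+1}))$. The only cosmetic difference is that you package the Euler-class ratio as the normal bundle of $Z^{e}$ in the product, whereas the paper writes it as the explicit quotient $\Lambda_{(e,w)}^{-1}\Lambda_{w}^{-1}\widetilde{\Lambda_{w}}\Lambda_{e}$; these coincide, so the argument is the same.
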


 \begin{proof}
 
We now compute the action of $[Z^e_{\underline{\nu},\underline{\nu}+1_i}]$ on $H_{*}(Fl_{\underline{\nu}+1_i})$.
To study the $T$-fixed point, we project
$Z^e_{\underline{\nu},\underline{{\nu}^{\prime}}}=\{gP_{\underline{\nu}},gP_{\underline{\nu}}P_{\underline{\nu}^{\prime}}/P_{\underline{\nu}^{\prime}}), g\in G\}$
 to the first factor. Since $(G/P_{\underline{\nu}})^T=W/W_{P_{\underline{\nu}}}$ and for $g$ a lift of $\bar{w}\in W/W_{P_{\underline{\nu}}}$, $(gP_{\underline{\nu}}P_{\underline{\nu}^{\prime}}/P_{\underline{\nu}^{\prime}})^{T}=wW_{P_{\underline{\nu}}}W_{P_{\underline{\nu}^{\prime}}}/W_{P_{\underline{\nu}^{\prime}}}$.
 We apply $r\circ i^{*}_{T,G}$  to
 $c_1(L^{+}_i)^r[Z^e_{\underline{\nu},\underline{{\nu}}+1_i}]$. By Theorem 3 it is
 $$\sum \limits_{w\in W/W_{P_{\underline{\nu}}},w^{\prime}\in wW_{P_{\underline{\nu}}}W_{P_{\underline{\nu}'}}} ww^{\prime}(x_{\overline{\nu}_i+1}) (\Lambda_{w,w^{\prime}})^{-1}p_{w,w^{\prime}},$$ where we use the fact that $c_1(L^{+}_i)$ on the component $Z_{\underline{\nu},\underline{\nu}+1_i}$ is $x_{\overline{v_i}+1}$ and
 $\Lambda_{w,w^{\prime}}$ is the $T\times C^{*}$-equivariant Euler class of $Z^e_{\underline{\nu},\underline{{\nu}}+1_i}$ at point $(w,w^{\prime})$.

 Apply $r\circ i^{*}_{T,G}$ to $f\in H^{G}_{*}(Fl_{\underline{\nu}+1_i})$, we get
 $$\sum \limits_{w^{\prime}\in W/W_{P_{\underline{\nu}+1_i}}} w^{\prime}(f)\Lambda_{w^{\prime}}^{-1}p_{w^{\prime}},$$ where $\Lambda_{w^{\prime}}$ is the $T$-equivariant Euler class of $Fl_{\underline{\nu}+1_i}$ at point $w^{\prime}$.

Using theorem 4, we compute
 their convolution, which is
 \begin{align*}
 &\sum \limits_{w\in W/W_{P_{\underline{\nu}}}} 
 \sum \limits_{w^{\prime}\in wW_{P_{\underline{\nu}}}W_{P_{\underline{{\nu}}^{\prime}}}/W_{P_{\underline{\nu}^{\prime}}}} ww^{\prime}(x_{\overline{\nu_{i}}+1}^r)
 (\Lambda_{w,w^{\prime}})^{-1} w^{\prime}(f)\Lambda_{w^{\prime}}^{-1} p_{w,w^{\prime}}\star p_{w^{\prime}}\\
 =&
 \sum \limits_{w\in W/W_{P_{\underline{\nu}}}} 
 \sum \limits_{w^{\prime}\in wW_{P_{\underline{\nu}}}W_{P_{\underline{{\nu}}^{\prime}}}/W_{P_{\underline{\nu}^{\prime}}}} ww^{\prime}(x_{\overline{\nu_{i}}+1}^r)
 (\Lambda_{w,w^{\prime}})^{-1} w^{\prime}(f)\Lambda_{w^{\prime}}^{-1} \widetilde{\Lambda_{w^{\prime}}}
 p_{w},
 \end{align*}
 where
 $\widetilde{\Lambda_{w^{\prime}}}$ is the  equivariant Euler class in $T^{*}Fl_{\underline{\nu}+1}$ at point $p_{w^{\prime}}.$

 Suppose the convolution product of $c_1(L^{+}_i)[Z^{e}_{\underline{\nu},\underline{\nu}+1_i}]$ and $f\in H^{G}_{*}(Fl_{\underline{\nu}+1_i})$ is $g\in H^{G}_{*}(Fl_{\underline{\nu}})$. By the Theorem 3, we have the image of $g$ under $r \circ i^{*}_{T,G}$  is
 $$\sum\limits_{w\in W/W_{P_1}}  w(g)\Lambda_{w}^{-1} p_w.$$
 Compare the above two terms we get
 \begin{align*}
 g=&\sum\limits_{w^{\prime}\in W_{P_{\nu}}/W_{P_{{\nu},{\nu}^{\prime}}}}
 w^{\prime} (x_{\overline{\nu_i}+1}^r)
 (\Lambda_{e,w^{\prime}})^{-1} w^{\prime}(f)\Lambda_{w^{\prime}}^{-1} \widetilde{\Lambda_{w^{\prime}}}\Lambda_{e}\\
 =& \sum\limits_{w\in W_{P_{\nu}}/W_{P_{{\nu},{\nu}^{\prime}}}}
 w (x_{\overline{\nu_i}+1}^r)
 (\Lambda_{e,w})^{-1} w(f)\Lambda_{w}^{-1} \widetilde{\Lambda_{w}}\Lambda_{e}\\
 =&
 \sum\limits_{w\in W_{P_{\nu}}/W_{P_{{\nu},{\nu}^{\prime}}}}
 w (x_{\overline{\nu_i}+1}^r f)
 (\Lambda_{e,w})^{-1} \Lambda_{w}^{-1} \widetilde{\Lambda_{w}}\Lambda_{e}.
 \end{align*}
 Now we compute these Euler classes.
 Denote the set of roots in $\mathfrak{g}/\mathfrak{p}_{\underline{\nu}}$ (resp. $\mathfrak{g}/\mathfrak{p}_{\underline{\nu}+1_i}$) by $R^{-}_{\underline{\nu}}$  (resp. $R^{-}_{\underline{\nu}+1_i}$) and the corresponding positive roots by $R_{\underline{\nu}}$(resp. $R_{\underline{\nu}+1_i}$).
 The tangent space of $Z^{e}_{\underline{\nu},\underline{\nu}+1_i}$ at $(e,w)$ is the same as at the point $(e,e)$ but the $T$ action differs by conjugation of $w$. Similarly for others.
 So we first compute the action on the tangent space at origin.
 The tangent space  $$T(G/P_{\underline{\nu},\underline{\nu}+1_i})\oplus (T^{*}(G/P_{\underline{\nu}})\cap T^{*}(G/P_{\underline{\nu}+1_i}))$$ of $Z^{e}_{\underline{\nu},\underline{\nu}+1_i}$ at origin is
 $$\mathfrak{g}/\mathfrak{p}_{\underline{\nu},\underline{\nu}+1_i}\oplus
 (\mathfrak{g}/\mathfrak{p}_{\underline{\nu}})^{*}\cap (\mathfrak{g}/\mathfrak{p}_{\underline{\nu}+1_i})^{*},
$$
 where the first summand is the base part and the second summand is the bundle part.
 So we get $\Lambda_{e,e}=\Pi_{s\in R^{-}_{\underline{\nu}}\cup R^{-}_{\underline{\nu}+1_i}} \alpha_{s} \times \Pi_{t\in R_{\underline{\nu}}\cap R_{\underline{\mu}+1_i} } \alpha_t.$ The difference between 
 $\Lambda_e$ and $\widetilde{\Lambda_e}$ is the bundle part,  we have
 $\Lambda_{e}^{-1}\widetilde{\Lambda_e}$ is $\Pi_{s\in R_{\underline{\nu}+1_i}} \alpha_{s}$ and $\Lambda_e$ is $\Pi_{s\in  R^{-}_{\underline{\nu}}}\alpha_{s}$.
 So
the product of $\Lambda_{(e,w)}^{-1}\Lambda_{w}^{-1}\widetilde{\Lambda_{w}}\Lambda_{e}$ is 
 $$w\left(
\frac
{\Pi_{s\in R^{-}_{\underline{\nu}}} \alpha_{s} \times \Pi_{s\in R_{\underline{\nu}+1_i}} \alpha_{s}}
{
\Pi_{s \in R^{-}_{\underline{\nu}}\cup R^{-1}_{\underline{\nu}+1_i}} \alpha_{s} \times \Pi_{s\in R_{\underline{\nu }}\cap  R_{\underline{ \nu }+1_i }  }  \alpha_s }\right).$$
If $i < n$,  the formula is the same with the case of type A, so we only need to give the formula in the case $i =n$.
$$
\frac
{\Pi_{s\in R^{-}_{\underline{\nu}}} \alpha_{s} }
{
\Pi_{s \in R^{-}_{\underline{\nu}}\cup R^{-1}_{\underline{\nu}+1_i}} \alpha_{s} }
= \frac{1}{\prod_{\bar{\nu}_{n} + 1 < t \leq  d} (-e_{\bar{\nu}_n+1} + e_t) (-e_{\bar{\nu}_n+1} - e_t)
  (-2e_{\bar{\nu}_n+1})} .$$
 $$
\frac
{ \Pi_{s\in R_{\underline{\nu}+1_i}} \alpha_{s}}
{ \Pi_{s\in R_{\underline{\nu }}\cap  R_{\underline{ \nu }+1_i }  }  \alpha_s }
=\prod_{\bar{\nu}_{n} + 1 < t \leq  d} (e_{\bar{\nu}_n+1} - e_t + \hbar) (e_{\bar{\nu}_n+1} + e_t + \hbar)
  (2e_{\bar{\nu}_n+1} + \hbar) .
$$
Then the theorem follows.
\end{proof}
\begin{theorem}
The assignment
$$\mathbf{e}_{i,r} \mapsto \mathcal{E}_{i,r}, \mathbf{f}_{i,r} \mapsto \mathcal{F}_{i,r},\mathbf{h}_{i,r} \mapsto \mathcal{H}_{i,r}$$
extends to an algebra homomorphism $\widetilde{Y}\rightarrow H^{G}_{*}(Z)$.
\end{theorem}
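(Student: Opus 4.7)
The plan is to reduce the verification of each defining relation of $\widetilde{Y}$ to an identity of operators on $H^G_*(Fl_{2n})$, and then to an identity of operators on the localized space $\cH^T_*(Fl_{2n})$, where Theorem \ref{action of type c} supplies closed-form expressions.  By Theorem 5 the convolution representation $\rho: H^G_*(Z)\to\mathrm{End}(H^G_*(Fl_{2n}))$ is injective, so the assignment extends to an algebra homomorphism if and only if the relations (R1)--(R6) and the remaining Drinfeld-type relations hold after applying $\rho$.  The diagram (3) then tells us that these relations can be tested after composing with the injection $r\circ i^*_{T,G}$ into $\cH^T_*(Fl_{2n})$, where the operators act diagonally through their values at the torus-fixed point classes $p_w$.

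First I would give an explicit formula for the operator $\mathcal{H}_{i,r}$.  Since $Z^{e}_{\underline{\nu},\underline{\nu}}$ is the zero section of $T^*Fl_{\underline{\nu}}$, the operator acts on each $H^G_*(Fl_{\underline\nu})$ by cap product with a $W_{P_{\underline\nu}}$-invariant symmetric function in the coordinates $x_{\bar\nu_{i-1}+1},\dots,x_{\bar\nu_i}$, extracted from the expansion of $\lambda_{-1/z}(F_{i,\underline\nu})/\lambda_{-1/z}(q^2 F_{i,\underline\nu})$.  A direct computation (mirroring the type A case) shows that for $i<n$ this is identical to the Yangian formula of \cite{varagnolo2000quiver}, while for $i=n$ there is an extra pair of factors coming from the symplectic pairing, producing the quadratic denominator in (R1), (R2).

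Next I would verify the relations one family at a time.  The relations among the $\mathbf{h}_{i,r}$ commute because all $\mathcal{H}_{i,r}$ act by multiplication by symmetric polynomials in the same torus coordinates.  The $[\mathbf{h}_{i,0},\mathbf{e}_{j,r}]$ and $[\mathbf{h}_{i,0},\mathbf{f}_{j,r}]$ relations follow by evaluating the leading symbol of $\mathcal{H}_{i,0}$ against the monomial $x_{\bar\nu_j+1}^r$ coming from $\mathcal{E}_{j,r}$ in Theorem \ref{action of type c}(b),(c), giving the Cartan eigenvalues $2\delta_{ij}-\delta_{i,j\pm1}+\delta_{i,n}\delta_{j,n}$; the last term is precisely the extra $1$ contributed by the factor $(1+\hbar/(2x_{\bar\nu_n+1}))$ in part (c).  The $i,j<n$ cases of (R3), (R4), and the Serre relations are handled by the standard type A computation, since the formulas of Theorem \ref{action of type c}(a),(b) are literally the ones used in \cite{varagnolo2000quiver}.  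The $[\mathbf{e}_{i,r},\mathbf{f}_{j,s}]$ relation (for $i,j\ne n$) is proved by computing the two compositions on each component $Fl_{\underline\nu}$ using Theorem 4 and matching the Euler-class ratio.

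The main obstacle will be the special relations at the node $n$, namely (R1), (R2), (R5), (R6) and the $i=j=n$ case of (R3), (R4).  Here the formula in Theorem \ref{action of type c}(c) contains the additional factor
\[
\Bigl(1+\tfrac{\hbar}{x_{\bar\nu_n+1}+x_t}\Bigr)\Bigl(1+\tfrac{\hbar}{2x_{\bar\nu_n+1}}\Bigr),
\]
which is responsible for the twisting.  I would handle these by treating $\mathcal{E}_{n,r}$ (and $\mathcal{F}_{n,r}$) as shift operators $x_{\bar\nu_n+1}\mapsto -x_{\bar\nu_n+1}$ dressed by the explicit rational factor, expand $x_{\bar\nu_n+1}^{r}$ into a generating series in $z$, and compare coefficients of $z^{-r}w^{-s}$ on both sides of each relation; the key algebraic identity one must verify is a rational-function identity in two variables that, after clearing denominators, reduces to the characteristic quadratic-versus-linear splitting appearing on the left-hand side of (R1), (R2).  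For (R5), (R6) the computation of the iterated commutator $[\mathcal{E}_{n,0},[\mathcal{E}_{n,0},\mathcal{F}_{n,0}]]$ collapses, after using $W_{P_{\underline\nu}}/(W_{P_{\underline\nu}}\cap W_{P_{\underline\nu\pm 1_n}})$ as a symmetrization, to the single ``diagonal'' term arising from $t=\bar\nu_n+1$ in the extra factor, and the numerical coefficient $-4$ is produced by the $(1+\hbar/(2x_{\bar\nu_n+1}))$ factor together with the sign conventions $(-1)^{\nu_{n+1}+1}$ and $(-1)^{\nu_n}$ in the definitions of $\mathcal{E}_{n,r},\mathcal{F}_{n,r}$.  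Once these $n$-node identities are verified, the theorem follows from Theorem 5.
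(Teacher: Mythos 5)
Your overall strategy coincides with the paper's: use the faithfulness of $\rho$ (Theorem 5) and the localized formulas of Theorem \ref{action of type c} to check the relations on $H^G_*(Fl_{2n})$, with (R4), the $i,j<n$ part of (R3) and the Serre relations imported from the type A computation of \cite{varagnolo2000quiver}, and with (R5), (R6) reduced to $n=1$ and verified by an explicit symmetrized computation. For (R1), (R2) the paper works not on the fixed-point side but directly with the $K$-theory classes, using the identity $F_{i,\underline{\nu}}-q^2F_{i,\underline{\nu}}=F_{i,\underline{\nu}+1_j}-q^2F_{i,\underline{\nu}+1_j}+([2\delta_{ij}-\delta_{|i-j|,1}]-\delta_{in}\delta_{jn})(q^{-1}-q)L_j^{+}$ and reducing everything to a rational-function identity in the single variable $c_1(L_n^{+})$; your generating-series comparison at the fixed points is the same computation in a different coordinate, and should go through.

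There is, however, a concrete gap in your case analysis of (R3). You single out ``the $i=j=n$ case of (R3)'' as the hard one, but (R3) is imposed only for $(i,j)\neq(n,n)$, so that case does not exist; the genuinely special instance is $\{i,j\}=\{n-1,n\}$, which is covered neither by your ``$i,j<n$'' reduction to type A (since $\mathcal{E}_{n,r}$ is governed by the part (c) formula with the extra factors $(1+\hbar/(x_{\bar\nu_n+1}+x_t))(1+\hbar/(2x_{\bar\nu_n+1}))$) nor by your list of node-$n$ relations. This is exactly where the paper does real work, and by a method different from localization: it forms $\widetilde{Z}_{ij}=p_{12}^{-1}Z^e_{\underline\nu,\underline\nu+1_i}\cap p_{23}^{-1}Z^e_{\underline\nu+1_i,\underline\nu+1_i+1_j}$, checks the intersection is transverse by a dimension count, realizes the maps $V_i'/V_i\hookrightarrow V_{i+1}'/V_{i+1}$ and $x:V_{i+1}'/V_{i+1}\to V_i'/V_i$ as sections of the line bundles $L(q)\otimes(V_i'/V_i)^{*}\otimes V_{i+1}'/V_{i+1}$ and its transpose over $\widetilde{Z}_{ji}$, $\widetilde{Z}_{ij}$, verifies that the two zero loci coincide and have the expected codimension, and concludes $(c_{n}-c_{n-1}+\tfrac{\hbar}{2})\mathcal{E}_{n,r}\ast\mathcal{E}_{n-1,s}=(c_{n-1}-c_{n}+\tfrac{\hbar}{2})\mathcal{E}_{n-1,s}\ast\mathcal{E}_{n,r}$, which is the $\{n-1,n\}$ instance of (R3). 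Your proposal would need either to reproduce this transversality argument or to carry out the fixed-point computation with the part (c) formula for this mixed case; as written it silently assumes the case away.
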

\begin{proof}
We only need to prove the relation $R1-R6$, the other relations of the $\widetilde{Y}$ are straightforward.

$\text{Relation} \ (R1)$.  Let $(fl, fl^{\prime},x)$  be an element in $Z^{e}_{\underline{\nu},\underline{\nu}+1_{j}}$. Recall
$$F_{i,\underline{\nu}} = -(1+q^{-2})V_i + q^{-1}(V_{i+1} +V_{i-1}),$$
$$F_{i,\underline{\nu} + 1_j} = -(1+q^{-2})V'_i + q^{-1}(V'_{i+1} + V'_{i-1}).$$
Then we have
$$F_{i,\underline{\nu}} - q^2F_{i,\underline{\nu}} = F_{i,\underline{\nu} + 1_j} - q^2F_{i,\underline{\nu} + 1_j} + ([2\delta_{ij} - \delta_{|i-j|,1}] -\delta_{in}\delta_{jn})(q^{-1} -q)L_j^{+},$$
where $[n]=\frac{q^n-q^{-n}}{q-q^{-1}}$.
It follows that $[\mathcal{H}_{i,r},\mathcal{E}_{j,s}]$ is the coefficient of $\hbar z^{-r-1}$ in
 \begin{align*}
 &(\lambda_{-\frac{1}{z}}(F_{i,\underline{\nu}} - q^2F_{i,\underline{\nu}}) - \lambda_{-\frac{1}{z}}(F_{i,\underline{\nu} + 1_j} - q^2F_{i,\underline{\nu} + 1_j}))^{-}\mathcal{E}_{j,s}\\
 &=\left(\lambda_{-\frac{1}{z}}(([2\delta_{ij} - \delta_{|i-j|,1}] -\delta_{in}\delta_{jn})(q^{-1} -q)L_j^{+}) - 1 \right)\lambda_{-\frac{1}{z}}(F_{i,\underline{\nu} + 1_j} - q^2F_{i,\underline{\nu} + 1_j}) \mathcal{E}_{j,s}.
 \end{align*}
We only give the proof for the case $i=j=n$, the other case is the same with \cite[Section 5]{varagnolo2000quiver}.
Let
$$A_s = \lambda_{-\frac{1}{z}}(F_{n}(\underline{\nu} + 1_n) - q^2F_{n}(\underline{\nu} + 1_n))\mathcal{E}_{n,s},$$
$$X = \lambda_{-\frac{1}{z}}(([2] -1)(q^{-1} -q)L_n^{+}) = \frac{(1-(c_n^+ - \hbar)z^{-1})(1-(c_n^+ + \frac{\hbar}{2})z^{-1})}{(1-(c_n^+ + \hbar)z^{-1})(1-(c_n^+ - \frac{\hbar}{2})z^{-1})},$$
where $c_n^+ = c_1{L_n^{+}}.$
Then the LHS and the RHS of the relation (R1) are respectively equal to the coefficient of $\hbar z^{-r+1}$ in
 \begin{align*}
 &(2(X-1)A_{s+2} - \hbar^2(X-1)A_s - 4z(X-1)A_{s+1} + 2z^2(X-1)A_s)^-\\
 &=(2(c_n^+)^2(X-1)A_{s} - \hbar^2(X-1)A_s - 4zc_{n}^+(X-1)A_{s} + 2z^2(X-1)A_s)^-
 \end{align*}
and
$$\hbar z(X+1)A_s - \hbar (X+1)A_{s+1} = \hbar z(X+1)A_s - \hbar c_n^+(X+1)A_{s}.$$
By direct computation, we have
 \begin{align*}
 &2(c_n^+)^2(X-1)A_{s} - \hbar^2(X-1)A_s - 4zc_{n}^+(X-1)A_{s} + 2z^2(X-1)A_s\\
 &=\hbar z(X+1)A_s - \hbar c_n^+(X+1)A_{s}.
 \end{align*}
The relation (R1) follows. The relation(R2) can be proved similarly.

$\text{Relation} \ (R3-R4)$.  The action of $\mathcal{F}_{i,r}$ and $\mathcal{F}_{j,r}$ for $j \neq n$  is the same as the  type A case in \cite{varagnolo2000quiver}. So the relation (R4) and the relation (R3) for $i \neq n, j \neq n$ follows. It is clear that (R3) holds for the case $i=n, |i-j|>1$. We only give the proof the relation (R3) for $i=n-1,\ j=n$.
We want to compute 
$\mathcal{E}_{i,r}\star \mathcal{E}_{j,s}$ and $\mathcal{E}_{j,r}\star \mathcal{E}_{i,s}$.
Let us consider   $\widetilde{Z}_{ij} = p_{12}^{-1}Z^{e}_{\underline{\nu} ,\underline{\nu} + 1_i} \cap p_{23}^{-1}Z^{e}_{\underline{\nu} + 1_i,\underline{\nu} + 1_i + 1_j}$
(resp. $\widetilde{Z}_{ji} = p_{12}^{-1}Z^{e}_{\underline{\nu} ,\underline{\nu} + 1_j} \cap p_{23}^{-1}Z^{e}_{\underline{\nu} + 1_j,\underline{\nu} + 1_i + 1_j}$)
in  $T^*Fl_{\underline{\nu}} \times  T^*Fl_{\underline{\nu} + 1_i}  \times T^*Fl_{\underline{\nu} + 1_i +1_j}$
(resp. $T^*Fl_{\underline{\nu}} \times  T^*Fl_{\underline{\nu} + 1_j}  \times T^*Fl_{\underline{\nu} + 1_i +1_j}$ ).

Let $d_1 = \text{dim} Fl_{\underline{\nu}}, 
d_2 = \text{dim} Fl_{\underline{\nu} + 1_i},
d_3 = \text{dim} Fl_{\underline{\nu} + 1_i +1_j}$.
then we have
 \begin{align*}
 &\text{dim} T^*Fl_{\underline{\nu}} \times  T^*Fl_{\underline{\nu} + 1_i}  \times T^*Fl_{\underline{\nu} + 1_i +1_j} = 2d_1 +2d_2 +2d_3,\\
&\text{dim}p_{12}^{-1}Z^{e}_{\underline{\nu} ,\underline{\nu} + 1_i} = d_1 + d_2 +2d_3 , \\
&\text{dim}p_{23}^{-1}Z^{e}_{\underline{\nu} + 1_i,\underline{\nu} + 1_i + 1_j} = 2d_1 + d_2 +d_3.
 \end{align*}
Moreover, for any $(x,y,z) \in p_{12}^{-1}Z^{e}_{\underline{\nu} ,\underline{\nu} + 1_i} \cap p_{23}^{-1}Z^{e}_{\underline{\nu} + 1_i,\underline{\nu} + 1_i + 1_j}$, $y$ is uniquely determined by $(x,z)$, that is, the restriction of $p_{13}$ to $\widetilde{Z}_{ij}$ is an isomorphism.  So
  \begin{align*}
 & \text{dim}p_{12}^{-1}Z^{e}_{\underline{\nu} ,\underline{\nu} + 1_i} + \text{dim}p_{23}^{-1}Z^{e}_{\underline{\nu} + 1_i,\underline{\nu} + 1_i + 1_j} - \text{dim} T^*Fl_{\underline{\nu}} \times  T^*Fl_{\underline{\nu} + 1_i}  \times T^*Fl_{\underline{\nu} + 1_i +1_j}\\
&= d_1 + d_3  , \\
&=\text{dim} \widetilde{Z}_{ij}.
 \end{align*}
i.e. the above  intersection is transverse.

If $j =i + 1$,
  Consider 
$$B_{i,i+1} : V_i'/V_i \hookrightarrow V_{i+1}'/V_{i+1},$$
as a section of the vector bundle $ L(q)\otimes( V_i'/V_i)^{\ast}\otimes V_{i+1}'/V_{i+1}  $  over $\widetilde{Z}_{ji}$.
 Let us denote it by $s_{ji}$ .
 Similarly, $x: V_{i+1}'/V_{i+1}  \rightarrow V_i'/V_i$ is a section (denoted by $s_{ij}$) of the vector bundle
$L(q) \otimes (V_{i+1}'/V_{i+1})^{\ast} )\otimes V_{i}'/V_{i} $ over $\widetilde{Z}_{ij}$.

The section $s_{ji}$ and  $\widetilde{Z}_{ji}$ are codimension one in the bundle $L(q)\otimes (V_i'/V_i)^{\ast} \otimes V_{i+1}'/V_{i+1}  $ . if $i = n-1$,  we have
  $$ \widetilde{Z}_{ji} = \{(fl,fl',x) | V_{n} \subseteq V_{n}' \subseteq V_n'^{\perp},V_{n-1} \subseteq V_{n-1}' \subseteq V_n', x(V_n') =V_{n-1},x(V_{n-1}')=0\}$$
$$\widetilde{Z}_{ij} = \{(fl,fl',x) | V_{n-1} \subseteq V_{n-1}' \subseteq V_n,V_{n} \subseteq V_{n}' \subseteq V_n'^{\perp}, x(V_n') =V_{n-1}',x(V_{n-1}')=0\}$$
  \begin{align*}
 & \widetilde{Z}_{ji} \cap s_{ji} = \widetilde{Z}_{ij} \cap s_{ij}\\
&=  \{(fl,fl',x) | V_{n} \subseteq V_{n}' \subseteq V_n'^{\perp},V_{n-1} \subseteq V_{n-1}' \subseteq V_n, x(V_n') =V_{n-1},x(V_{n-1}')=0\}.
 \end{align*}
 Thus $\widetilde{Z}_{ji} \cap s_{ji} = \widetilde{Z}_{ij} \cap s_{ij}$  is codimension two in $L(q)\otimes (V_i'/V_i)^{*}\otimes V_{i+1}'/V_{i+1}  $ over $\widetilde{Z}_{ji}$. Therefore, $s_{ij}$ (resp. $s_{ij}$) are transversal to zero section. 
  Then
 $$c_1(L(q)\otimes( V_i'/V_i)^{\ast}\otimes V_{i+1}'/V_{i+1})\mathcal{E}_{i+1,r}\ast \mathcal{E}_{i,s} = c_1(L(q)\otimes (V_{i+1}'/V_{i+1})^{\ast} )\otimes V_{i}'/V_{i})\mathcal{E}_{i,s}\ast \mathcal{E}_{i+1,r}, $$
 i.e.
 $$(c_{n} - c_{n-1} + \frac{\hbar}{2})\mathcal{E}_{i+1,r}\ast \mathcal{E}_{i,s} = (c_{n - 1} - c_{n} + \frac{\hbar}{2})\mathcal{E}_{i,s}\ast \mathcal{E}_{i+1,r}.$$
 The relation (R4) follows from this.

$\text{Relation} \ (R5-R6).$ In order to prove the last two relations, it suffices to prove the case $n = 1$.
Fix  a partition $I = (0,i,2d-2i,i)$, and  $Q  \in (S_T\otimes \CC)^{W_{P_I}}.$  Let $Q \otimes e_i \in H^G_{*}(fl_I) $. Set
$$\omega(\pm l, \pm k) = 1+ \frac{\hbar}{\mp x_l -(\mp x_k)}, \omega(\pm l, \mp k) = 1+ \frac{\hbar}{\mp x_l -(\pm x_k)}, \forall l,k \in [1,d].$$
 
Then we have
$$\mathbf{h}_{1,0}(Q\otimes e_i) = (2d - 3i)Q \otimes e_i.$$
 \begin{align*}
 \mathbf{f}_{1,0}(Q\otimes e_i)
 &= \sum\limits_{l = 1}^{1+i} (l,1+i)(\prod\limits_{s = 1}^i  (1+ \frac{\hbar}{ x_s - x_{1+i}})Q)\otimes e_{i+1}\\
 &=\sum\limits_{l = 1}^{1+i}\prod\limits_{s = 1,s\neq l}^i \omega(l,s)(l,i+1)Q \otimes e_{i+1}.
 \end{align*}

 \begin{align*}
 \mathbf{e}_{1,0}(Q\otimes e_i)
 &= \sum\limits_{k = i}^d \mathbb{Z}_{2,k}\times (k,i)[(\prod\limits_{t = i+1}^d (1+ \frac{\hbar}{ x_i - x_{t}}) (1+ \frac{\hbar}{ x_i + x_{t}})(1+ \frac{\hbar}{2x_i})Q]\otimes e_{i-1}.\\
 &=\sum\limits_{k = i}^d\prod\limits_{t = i,t\neq k}^d\omega(t,k)\omega(-t,k)\omega(-k,k)(k,i)Q \otimes e_{i-1}\\
 & \quad +\sum\limits_{k = i}^d\prod\limits_{t = i,t\neq k}^d\omega(t,-k)\omega(-t,-k)\omega(k,-k)[r]_k(k,i)Q \otimes e_{i-1} .
 \end{align*}
  Then the remaining calculation is the same with that of Appendix in the paper\cite{fan2019equivariant} so we skip it and
 the proposition follows.

\end{proof}
\section{ Type B case }

Let $V$ be a $2d+1$ dimensional  vector space  over complex numbers.
We still fix a nondegenerate skew-symmetric bilinear form  $(\ ,\ )$. For any subspace $W\subset V$,  let $W^{\perp}=\{x\in V|(x,w)=0 ~\text{for any}~ w\in W\}$.
We can define the flag variety of type B as follows,
$$Fl_B=\{ fl=( 0=V_0 \subset V_1  \cdots \subset V_{2n+1}=V)\mid  V_i=V_{2n+1-i}^{\perp}  \}. $$

For a flag $fl \in Fl_B$, we can still define  $\nu_{i}=dim V_{i}-dim V_{i-1}$ and $\underline{\nu}=(\nu_1,\cdots , \nu_{2n+1})$ be the dimension vector.
We denote by $Fl_{B,\underline{\nu}}$ the flag variety in $V$ of dimension vector $\underline{\nu}$.
Namely,
$$Fl_{B,\nu}=\{0\subset V_{1}\subset V_{2} \cdots V_{n}\subset V_{n+1} \subset \cdots \subset V_{2n}\subset V \ | \  dim V_{i}-dim V_{i-1}=\nu_{i}\}.$$
Hence $Fl_{B}$ is the disjoint union of $Fl_{B,\underline{\nu}}$ over all dimension vectors $\overline{\nu}$.
Let $G_B$ be the  group $O(V)\times\CC^{*}$.
Similarly as section \ref{steinberg variety of type c} and Lemma \ref{orbit sturcture of type c},
we can define the Steinberg variety $Z_B$ of type B. The group $G_B$ still have a natural action on $ Z_B$ which has the following Bruhat decomposition
$$ Z_{B,\nu,\nu^{\prime}}=\bigcup_{{w\in  W_{P_{\nu}}} \backslash W/ W_{P_{\nu}^{\prime}} }Z^{w}_{B,\underline{\nu},\underline{\nu^{\prime}}}.$$
Define $\mathscr{E}_{i,r}$,  $\mathscr{F}_{i,r}$ ,$\mathscr{H}_{i,r}$ in the  same way as  $\mathcal{E}_{i,r}$, $\mathcal{F}_{i,r}$,
$$  \mathscr{E}_{i,r} = \sum\limits_{\underline{\nu}} (-1)^{\nu_{i+1}}c_1(L)\bullet[Z^{e}_{B,\underline{\nu},\underline{\nu}+1_{i}}]$$
$$  \mathscr{F}_{i,r} = \sum\limits_{\underline{\nu}} (-1)^{\nu_i}c_1(L)\bullet[Z^{e}_{B,\underline{\nu},\underline{\nu}-1_{i}}].$$

By a similar argument of Theorem \ref{action of type c}, we have the following theorem.
\begin{theorem}
 Fix a partition $\underline{\nu}$.

 (a) For any$ 1 \leq i \leq n$ ,  $f \in  H_{*}^G(T^*Fl_{\underline{\nu} - 1_{i} })$, we have
 \begin{equation*}
\begin{split}
 \mathscr{F}_{i,r}(f)
  =  W_{P_{\underline{\nu}}}/(W_{P_{\underline{\nu}}} \cap W_{P_{\underline{\nu} - 1_i}}) (\prod_{\bar{\nu}_{i-1} < t < \bar{\nu}_{i}}  (1 + \frac{ \hbar}{ x_{\bar{\nu}_i} - x_t})x_{\bar{\nu}_i}^r\cdot f).
 \end{split}
 \end{equation*}

 (b) If $ 1 \leq i < n$ ,  for any  $f \in  H_{*}^G(T^*Fl_{\underline{\nu} + 1_{i} })$, we have
 \begin{equation*}
\begin{split}
 \mathscr{E}_{i,r}(f)
  =   W_{P_{\underline{\nu}}}/(W_{P_{\underline{\nu}}} \cap W_{P_{\underline{\nu} + 1_i}}) (\prod_{\bar{\nu}_{i} + 1 < t \leq  \bar{\nu}_{i+1}} (1 + \frac{ \hbar}{ x_{\bar{\nu}_i+1} - x_t})x_{\bar{\nu}_i+1}^r\cdot f).
 \end{split}
 \end{equation*}

 (c)  If $i = n$,  for any  $f \in  H_{*}^G(T^*Fl_{\underline{\nu} + 1_{n} })$  , we have
 \begin{equation*}
\begin{split}
 & \mathscr{E}_{i,n,\underline{\nu}}(f)\\
  = &    W_{P_{\underline{\nu}}}/(W_{P_{\underline{\nu}}} \cap W_{P_{\underline{\nu} + 1_n}})((\prod_{\bar{\nu}_{n} + 1 < t \leq  d}
  (1 + \frac{ \hbar}{ x_{\bar{\nu}_n+1} - x_t}) (1 + \frac{ \hbar}{ x_{\bar{\nu}_n+1} + x_t}))
  (1 + \frac{ \hbar}{ x_{\bar{\nu}_n+1}})x_{\bar{\nu}_n+1}^r \cdot f).
 \end{split}
 \end{equation*}
 \end{theorem}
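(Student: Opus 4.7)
The plan is to follow exactly the same localization strategy used in Theorem \ref{action of type c}, replacing the symplectic group $Sp(V)$ by the orthogonal group $O(V)$ throughout, and tracking carefully the point where the two root systems differ. As before, I would first reduce the computation to the localized torus-equivariant homology by means of the commutative diagram involving $r\circ i^{*}_{T,G}$, which is injective by Theorem 2 and Theorem 1. Since $\mathscr{E}_{i,r}$ and $\mathscr{F}_{i,r}$ are supported on a single component $Z^{e}_{B,\underline{\nu},\underline{\nu}\pm 1_{i}}$, it suffices to analyze the action on each such block separately, and by Poincaré duality on $Fl_{B,\underline{\nu}\pm 1_{i}}$ it suffices to test against classes of the form $f[X]$ with $f\in H^{*}_{G_{B}}(X)$.

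Next I would carry out the fixed-point analysis: project $Z^{e}_{B,\underline{\nu},\underline{\nu}'}$ to its first factor, identify $(G_{B}/P_{\underline{\nu}})^{T}=W/W_{P_{\underline{\nu}}}$, and repeat the computation of Theorem 3 to obtain the decomposition
$$(r\circ i^{*}_{T,G_{B}})\bigl(c_{1}(L)^{r}[Z^{e}_{B,\underline{\nu},\underline{\nu}+1_{i}}]\bigr)=\sum_{w,w'} ww'(x_{\bar{\nu}_{i}+1}^{r})\,\Lambda_{w,w'}^{-1}\,p_{w,w'}.$$
Then I would apply Theorem 4 to compute the convolution product of the resulting fixed-point classes with a class of the form $\sum_{w'} w'(f)\Lambda_{w'}^{-1}p_{w'}$ coming from $Fl_{B,\underline{\nu}+1_{i}}$. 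The remaining task is to evaluate the ratios $\Lambda_{(e,w)}^{-1}\Lambda_{w}^{-1}\widetilde{\Lambda}_{w}\Lambda_{e}$ as $w$-translates of a product of roots, exactly as in the type C proof.

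The only genuinely new input is the substitution of root data: for $O(V)$ acting on a $(2d+1)$-dimensional space, the root system is of type $B_{d}$, so the set of roots in $\mathfrak{g}/\mathfrak{p}_{\underline{\nu}}$ contains the short roots $\pm e_{j}$ in addition to $\pm e_{j}\pm e_{k}$, whereas for $Sp(V)$ the corresponding extra roots are the long roots $\pm 2e_{j}$. Cases (a) and (b) involve only roots of the form $e_{j}-e_{k}$, and these are identical to the type A (and type C) situation, so the formulas transfer verbatim. The only place where the change matters is in case (c), where one must replace the factor $(1+\tfrac{\hbar}{2x_{\bar{\nu}_{n}+1}})$ coming from the root $-2e_{\bar{\nu}_{n}+1}$ in type C by $(1+\tfrac{\hbar}{x_{\bar{\nu}_{n}+1}})$ coming from the short root $-e_{\bar{\nu}_{n}+1}$ in type B.

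The main (though mild) obstacle is the bookkeeping: one needs to verify that the decomposition of $\mathfrak{g}/\mathfrak{p}_{\underline{\nu}}$ in type B still gives a transverse intersection at the origin of $Z^{e}_{B,\underline{\nu},\underline{\nu}+1_{n}}$, that the tautological line bundle $L$ restricts correctly on the relevant cell, and that no extra contributions appear from the $1$-dimensional kernel of the bilinear form. Once these points are confirmed, the formula follows from the same sequence of manipulations of Euler classes that produced the type C statement, with the single short-root factor replaced as above.
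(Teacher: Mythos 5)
Your proposal matches the paper's intent exactly: the paper gives no separate proof for the type B case, stating only that it follows ``by a similar argument'' to Theorem \ref{action of type c}, which is precisely the localization-and-Euler-class computation you reproduce. Your identification of the only substantive change --- the long root $-2e_{\bar{\nu}_n+1}$ of $Sp(V)$ being replaced by the short root $-e_{\bar{\nu}_n+1}$ of $O(V)$, turning the factor $(1+\tfrac{\hbar}{2x_{\bar{\nu}_n+1}})$ into $(1+\tfrac{\hbar}{x_{\bar{\nu}_n+1}})$ in case (c) --- is the correct and essentially complete content of that ``similar argument.''
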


\begin{theorem}
The assignment
$$\mathbf{e}_{i,r} \mapsto \mathscr{E}_{i,r}, \mathbf{f}_{i,r} \mapsto \mathscr{F}_{i,r},\mathbf{h}_{i,r} \mapsto \mathscr{H}_{i,r}$$
can be extended to an algebra homomorphism $\widetilde{Y}\rightarrow H^{\mathbb{C}^{*} \times G}(Z)$.
\end{theorem}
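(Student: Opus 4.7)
The plan follows the proof of Theorem 6 essentially verbatim, with $Z_B$, $G_B=O(V)\times\CC^{*}$, and the type-B root data substituted for the type-C data throughout. First I would record a type-B analogue of the faithfulness statement (Theorem 5): the orbit stratification $Z_B = \bigsqcup Z_{B,\underline{\nu},\underline{\nu}'}$ and the Bruhat decomposition noted above produce a point-class basis of the localized $T$-equivariant homology, and the same convolution argument shows that $H^{G_B}_{*}(Z_B) \to \mathrm{End}(H^{G_B}_{*}(Fl_B))$ is injective. This reduces every defining relation of $\widetilde{Y}$, via the action formulas of the preceding theorem, to a rational-function identity in $x_1,\ldots,x_d$ and $\hbar$.

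For the relations (R2), (R3), (R4) and the $i\neq n$ case of (R1), the action formulas for $\mathscr{E}_{i,r}$ and $\mathscr{F}_{i,r}$ at interior nodes coincide with the type-A/C formulas, so these relations inherit without change from the corresponding computations in \cite{varagnolo2000quiver} and from the interior-node part of Theorem 6. The sole point where type B differs from type C is the Euler class of the cotangent fiber at the short root $n$: the restricted root attached to $x_{\overline{\nu}_n+1}$ is $e_{\overline{\nu}_n+1}$ rather than $2e_{\overline{\nu}_n+1}$, so the factor $\bigl(1+\tfrac{\hbar}{2x_{\overline{\nu}_n+1}}\bigr)$ appearing in the action of $\mathcal{E}_{n,r}$ is replaced by $\bigl(1+\tfrac{\hbar}{x_{\overline{\nu}_n+1}}\bigr)$.

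Consequently the genuine new content is (R1) with $i=j=n$ together with the cubic relations (R5), (R6). For (R1), I would redo the K-theoretic expansion of $\lambda_{-1/z}(F_{n,\underline{\nu}}-q^{2}F_{n,\underline{\nu}})$ with the modified fiber class at the $n$-th step, obtaining a new rational function $X_B$ in place of the type-C $X$, and then verify by direct computation the analogue of the identity
$$2(c_n^+)^2(X_B-1)A_s - \hbar^{2}(X_B-1)A_s - 4zc_n^+(X_B-1)A_s + 2z^{2}(X_B-1)A_s = \hbar z(X_B+1)A_s - \hbar c_n^+(X_B+1)A_s.$$
For (R5) and (R6), I would reduce to the $n=1$ case exactly as in Theorem 6 and perform the weight-by-weight computation on the three-step flag variety of dimension vector $(0,i,2d+1-2i,i)$, adjusting the weight factors $\omega$ to the type-B roots: in particular $\omega(k,-k)=1+\tfrac{\hbar}{2x_k}$ is replaced by $1+\tfrac{\hbar}{x_k}$. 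The remaining combinatorial identities then run in parallel with the Appendix of \cite{fan2019equivariant}.

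The main obstacle is to check that the scalar on the right-hand sides of (R1), (R5) and (R6) remains unchanged under this substitution. Although the replacement of $(1+\hbar/(2x))$ by $(1+\hbar/x)$ looks like a harmless rescaling, it enters both the Euler-class denominator and the Chern-polynomial numerator, and the cancellations must be tracked with care. I expect the same structural identities as in type C to survive, but the verification is the step most susceptible to sign and normalization errors.
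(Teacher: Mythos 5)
Your plan coincides with the paper's intended argument: the paper gives no separate proof of this theorem at all, simply asserting it as the analogue of the type C case (Theorem 6), and your outline --- faithfulness of the convolution action on $H^{G_B}_{*}(Fl_B)$, inheritance of the interior-node relations from the type A/C computations, and re-verification of (R1) at $i=j=n$ and of (R5)--(R6) with the factor $\bigl(1+\tfrac{\hbar}{2x_{\overline{\nu}_n+1}}\bigr)$ replaced by $\bigl(1+\tfrac{\hbar}{x_{\overline{\nu}_n+1}}\bigr)$ --- is exactly the adaptation the paper leaves implicit. The single computational check you flag (that the scalars in (R1), (R5) and (R6) survive the change of Euler factor at the $n$-th node) is indeed the only substantive point, and the paper supplies no more detail on it than you do.
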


\section{Twisted polynomial current Lie algebra}
\allowdisplaybreaks
It is well known that Yangian $Y_{\hbar}(\mathfrak{g})$ is a deformation of the universal enveloping algebra for the polynomial current Lie algebra $\mathfrak{g}[x]$.  Let $\sigma$ be the involution of $\mathfrak{g}$. The subalgebra
$$\mathfrak{g}[x]^{\sigma} = \{A(x) \in \mathfrak{g}[x] \mid \sigma(A(x)) = A(-x)\}$$
in the Lie algebra $\mathfrak{g}[x]$ is called the twisted polynomial current Lie algebra related to  $(\mathfrak{g}, \sigma)$.
Twisted Yangian $Y(\mathfrak{g},\sigma)$ is the deformation of the $U(\mathfrak{g}[x]^{\sigma})$. In the paper \cite{fan2019equivariant},  there is an equivariant K theory approach to quantum symmetric pair by using the Steinberg variety of type B/C. In the light of Molev's work about twisted Yangian, if  we  consider the symmetric pair $(\mathfrak{sl}_{2n+1}, \mathfrak{sl}_{2n+1}^{\theta})$ where $\theta$ is an involution of $\mathfrak{sl}_{2n+1}$ defined by $\theta(e_i) = f_{2n+1-i}, \theta(f_i) = e_{2n+1-i} $, we have the following special twisted polynomial current Lie algebra
$$\mathfrak{sl}_{2n+1}[x]^{\theta} = \{A(x) \in \mathfrak{sl}_{2n+1}[x] \mid \sigma(A(x)) = A(-x)\}.$$
By directly computation, we have the following proposition.
\begin{prop}
$U(\mathfrak{sl}_{2n+1}[x]^{\theta})$ is generated by the $e_{i,r},f_{i,r},h_{i,r}$  satisfied the following relations.
\begin{eqnarray*}
&&h_{i,r}h_{j,s}=h_{j,s}h_{i,r},\\
&&[h_{i,r}, e_{j,s}]=(2\delta_{i, j}\delta_{r,even} -\delta_{i,j+1}- \delta_{i,j-1}  + \delta_{i,n}\delta_{j,n})e_{j,r+s}, \\
&&[h_{i,r},f_{j,s}]=(-2\delta_{i, j}\delta_{r,even} +\delta_{i,j+1}+ \delta_{i,j-1}  - \delta_{i,n}\delta_{j,n})f_{j,r+s},\\
&& [e_{i,r}, f_{j,s}]=\delta_{ij}h_{i,r+s},  \quad \quad \,   {\rm if} \ i, j\neq n,\\
&&e_{i,r}e_{j,s}=e_{j,s}e_{i,r},\quad f_{i,r}f_{j,s}=f_{j,s}f_{i,r},\quad \hspace{2.1cm}  {\rm if}\ |i-j|>1,\\
&&[e_{i,r+1},e_{j,s}] = [e_{i,r},e_{j,s+1}],\\
&&[f_{i,r+1},f_{j,s}] = [f_{i,r},f_{j,s+1}],\\
&&[e_{i,r_1},[e_{i,r_2},e_{j,s}]]=0,\quad \hspace{.51cm}  {\rm if}\ |i-j|=1,\\
&&[f_{i,r_1},[f_{i,r_2},f_{j,s}]]=0,\quad \hspace{.71cm}  {\rm if}\ |i-j|=1,\\
&&[e_{n,r_1},[e_{n,r_2},f_{n,s}]]=-2(\delta_{r_1+s,even} + \delta_{r_2+s,even})e_{n,r_1+r_2+s}, \\
&& [f_{n,r_1},[f_{n,r_2},e_{n,s}]]=-2(\delta_{r_1+s,even} + \delta_{r_2+s,even})f_{n,r_1+r_2+s}.
\end{eqnarray*}
\end{prop}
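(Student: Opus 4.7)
The plan is to realize the abstract generators $e_{i,r}, f_{i,r}, h_{i,r}$ as specific elements of $\mathfrak{sl}_{2n+1}[x]^\theta$ and then verify the listed relations by direct Lie-theoretic computation, concluding with a Poincar\'e--Birkhoff--Witt argument for completeness. An element $A(x) = \sum_r x^r A_r \in \mathfrak{sl}_{2n+1}[x]$ satisfies $\theta(A(x)) = A(-x)$ precisely when $\theta(A_r) = (-1)^r A_r$, so the coefficient of $x^r$ must lie in the $(-1)^r$-eigenspace of $\theta$. Using $\theta(e_i) = f_{2n+1-i}$, $\theta(f_i) = e_{2n+1-i}$, and $\theta(h_i) = -h_{2n+1-i}$, natural candidates for the generators are
\begin{equation*}
e_{i,r} = x^r e_i + (-x)^r f_{2n+1-i}, \quad f_{i,r} = x^r f_i + (-x)^r e_{2n+1-i}, \quad h_{i,r} = x^r h_i - (-x)^r h_{2n+1-i},
\end{equation*}
for $1 \le i \le n$ and $r \ge 0$. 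A short argument using the Chevalley generators of $\mathfrak{sl}_{2n+1}$ shows that these form a Lie generating set of $\mathfrak{sl}_{2n+1}[x]^\theta$, since both $\mathfrak{k}\otimes \mathbb{C}[x^2]$ and $\mathfrak{p}\otimes x\mathbb{C}[x^2]$ are reached by repeated brackets of the above elements.

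Next I would verify each listed relation by expanding in the $\mathfrak{sl}_{2n+1}$ bracket and using the symmetry $a_{ij} = a_{2n+1-i, 2n+1-j}$ of the $A_{2n}$ Cartan matrix. The commutativity of the $h_{i,r}$ is immediate. The bracket $[e_{i,r}, f_{j,s}] = \delta_{ij} h_{i,r+s}$ for $i, j \neq n$ holds because the cross-terms $[e_i, e_{2n+1-j}]$ and $[f_{2n+1-i}, f_j]$ vanish whenever $|i-(2n+1-j)|>1$. The shift relations $[e_{i,r+1}, e_{j,s}] = [e_{i,r}, e_{j,s+1}]$ and the commutativities for $|i-j|>1$ are immediate from $x^{r+1}x^s = x^r x^{s+1}$ and orthogonality of supports, while the Serre relations for $|i-j|=1$ at $i<n$ descend directly from the Serre relations in $\mathfrak{sl}_{2n+1}$.

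The main computation is at the middle node $i=n$, where $[e_n, e_{n+1}] \neq 0$ because $\alpha_n$ and $\alpha_{n+1}$ are adjacent in the Dynkin diagram. Consequently $[e_{n,r}, f_{n,s}]$ acquires extra contributions compared with the $i\neq n$ case, and bracketing once more against $e_{n,r'}$ produces terms coming from $[e_n, [f_{n+1}, f_n]]$ and $[f_{n+1}, [e_n, e_{n+1}]]$. Direct expansion gives $[e_{n,0},[e_{n,0},f_{n,0}]] = -4(e_n + f_{n+1}) = -4 e_{n,0}$, matching $-2(\delta_{0,\text{even}} + \delta_{0,\text{even}}) e_{n,0}$; for general parities, the signs $(-x)^{r_1}, (-x)^{r_2}, (-x)^s$ combine so that only the pairs of indices with $r_i + s$ even contribute, yielding the factor $\delta_{r_1+s, \text{even}} + \delta_{r_2 + s, \text{even}}$. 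The relation (R6) and the analogous $h_n$-Serre relations follow by the same bookkeeping.

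To see that the listed relations are complete, I would invoke PBW: the abstract Lie algebra defined by the presentation admits a spanning set obtained by ordering monomials in the generators via the relations, and this spanning set is indexed by the same data as a natural weight basis of $\mathfrak{sl}_{2n+1}[x]^\theta$; the surjection from the abstract algebra onto $\mathfrak{sl}_{2n+1}[x]^\theta$ then becomes an isomorphism after passing to universal enveloping algebras by dimension count in each graded piece. The main obstacle is the case-by-case analysis at the middle node $i=n$, where multiple cross-brackets contribute and parity tracking is delicate; away from the middle, the computation reduces to the standard presentation of the loop algebra $\mathfrak{sl}_{2n+1}[x]$.
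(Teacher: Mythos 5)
The paper offers no actual proof of this proposition (it is prefaced only by ``by directly computation''), so your strategy --- realize the generators explicitly inside $\mathfrak{sl}_{2n+1}[x]^{\theta}$, check the relations, then argue completeness --- is the right and essentially the only way to substantiate the claim, and your candidates $e_{i,r}=x^{r}e_{i}+(-x)^{r}f_{2n+1-i}$, $f_{i,r}=x^{r}f_{i}+(-x)^{r}e_{2n+1-i}$, $h_{i,r}=x^{r}h_{i}-(-x)^{r}h_{2n+1-i}$ are the natural ones. The problem is in the step you defer: for these elements the $h$--$e$ relation does not hold as stated. Using $a_{2n+1-i,2n+1-j}=a_{ij}$ and $a_{i,2n+1-j}=-\delta_{in}\delta_{jn}$ for $i,j\le n$, a direct bracket gives
\[
[h_{i,r},e_{j,s}]=\bigl(2\delta_{ij}-\delta_{i,j+1}-\delta_{i,j-1}+(-1)^{r}\delta_{in}\delta_{jn}\bigr)e_{j,r+s},
\]
which agrees with the stated coefficient $2\delta_{ij}\delta_{r,even}-\delta_{i,j+1}-\delta_{i,j-1}+\delta_{in}\delta_{jn}$ when $i\neq j$ and when $i=j=n$ (both equal $2+(-1)^{r}$ there), but not when $i=j<n$ and $r$ is odd: you get $2e_{i,r+s}$ while the proposition demands $0$. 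This is not a defect of your particular choice of generators: the listed relations are mutually inconsistent, because $h_{i,r}=[e_{i,r},f_{i,0}]$ for $i\neq n$, $[e_{i,r},e_{i,s}]=0$ (forced by the shift relation plus antisymmetry), and the Jacobi identity together give $[h_{i,r},e_{i,s}]=[h_{i,s},e_{i,r}]$, which is incompatible with the coefficient $2\delta_{r,even}$ unless $e_{i,t}=0$ for all odd $t$. So before anything else you must either correct the $h$--$e$ and $h$--$f$ relations (to the $(-1)^{r}\delta_{in}\delta_{jn}$ form above) or exhibit genuinely different generators; as written, the verification step you postpone would fail.

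Separately, your completeness argument is only an outline. Invoking PBW plus ``a dimension count in each graded piece'' presupposes the hard half of the argument: one must extract from the relations alone a spanning set of the abstractly presented Lie algebra whose size in each degree is bounded by $\dim(\mathfrak{k}\otimes x^{2k})$, resp.\ $\dim(\mathfrak{p}\otimes x^{2k+1})$, and none of the listed relations obviously controls, for instance, $[e_{n,r},f_{n,s}]$ (which is excluded from the $[e,f]=h$ relation and is tied to $h_{n,\cdot}$ by nothing in the list) or the iterated brackets at the middle node. Until that spanning-set argument is supplied, what you would have established --- after the correction above --- is that the displayed relations hold among a generating set of $\mathfrak{sl}_{2n+1}[x]^{\theta}$, not that they constitute a presentation.
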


Recall $Z$ (resp. $Z_B$)  is the Steinberg variety of type C (resp. B), there is a homomorphism from twisted Yangian $\widetilde{Y} $  to $H^{ G}(Z)$(resp. $H^{ G_B}(Z_B)$).  In the same spirit as Molev's twisted Yangian, so we expect that there exists twisted Yangian which  is the deformation of $U(\mathfrak{sl}_{2n+1}[x]^{\theta})$.

\begin{theorem}
There are algebra homomorphisms
$$U(\mathfrak{sl}_{2n+1}[x]^{\theta}) \rightarrow H^{Sp(2d)}(Z_C),$$
$$U(\mathfrak{sl}_{2n+1}[x]^{\theta}) \rightarrow H^{O(2d+1)}(Z_B).$$
\end{theorem}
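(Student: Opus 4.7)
The plan is to follow the same blueprint as the proof of Theorem 6, sending $e_{i,r},f_{i,r},h_{i,r}$ to the $\hbar=0$ specializations of the geometric operators $\mathcal{E}_{i,r},\mathcal{F}_{i,r},\mathcal{H}_{i,r}$ in type C (and $\mathscr{E}_{i,r},\mathscr{F}_{i,r},\mathscr{H}_{i,r}$ in type B). After defining the map on generators, I verify the defining relations of $U(\mathfrak{sl}_{2n+1}[x]^{\theta})$ listed in the preceding proposition, using the faithfulness of the action on $H^{G}_{*}(Fl_{2n})$ (Theorem 3 of Section~6) together with the explicit action formulas in Theorem~\ref{action of type c}. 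Since $H^{G}(Z) = H^{G\times\mathbb{C}^*}(Z)/(\hbar)$ via the structural map, the relations should split into two groups: those inherited from $\widetilde{Y}$ after setting $\hbar=0$, and those genuinely new relations that reflect the parity constraints of the twisted current algebra.

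For the first group, many relations follow by specialization. Setting $\hbar=0$ in (R1), for $j\neq n$, yields $[\mathcal{H}_{i,r+1},\mathcal{E}_{j,s}]=[\mathcal{H}_{i,r},\mathcal{E}_{j,s+1}]$, which gives the relation $[h_{i,r+1},e_{j,s}]=[h_{i,r},e_{j,s+1}]$ via the map. Similarly, the Serre relations, the vanishing commutators for $|i-j|>1$, and the analogous relations for the $\mathcal{F}$'s all follow directly from the $\hbar=0$ limits of the corresponding $\widetilde{Y}$ relations, together with the faithfulness of the representation. The relations (R5)--(R6) match precisely the $r_1=r_2=s=0$ case of the twisted triple-commutator relations, so they carry over verbatim.

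The main obstacle is the parity dependence in the commutators $[h_{i,r},e_{j,s}]$ and in the higher triple-commutator relations $[e_{n,r_1},[e_{n,r_2},f_{n,s}]]=-2(\delta_{r_1+s,\text{even}}+\delta_{r_2+s,\text{even}})\,e_{n,r_1+r_2+s}$. These are not consequences of the pre-twisted Yangian at $\hbar=0$ alone; they reflect the identification $\theta(A(x))=A(-x)$ intrinsic to $\mathfrak{sl}_{2n+1}[x]^{\theta}$. I would verify them by direct computation on $H^{G}_{*}(Fl_{2n})$: by Theorem~\ref{action of type c}, at $\hbar=0$ the operator $\mathcal{H}_{i,r}$ acts as a symmetrized multiplication by an explicit polynomial in the coordinates $x_{\bar\nu_i+1},\dots,x_{\bar\nu_{i+1}}$ (and, when $i=n$, involving also $-x_t$ and $\pm 2x_{\bar\nu_n+1}$ factors coming from the type C/B root system). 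Since the $\pm$-symmetry of the Weyl group acts nontrivially on half the coordinates and trivially on the rest, the symmetric combinations that appear vanish for odd powers when $i\neq n$, producing exactly the $\delta_{r,\text{even}}$ factor.

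Once this parity computation is verified, the higher triple-commutator relations at $i=n$ can be checked by a brute-force calculation on the single-component fiber, exactly as in the proof of (R5)--(R6) of Theorem~6 where the author defers to the Appendix of \cite{fan2019equivariant}; the same reduction from $n\geq 1$ to $n=1$ works, and the $W$-symmetrization together with the $(1+\hbar/2x_{\bar\nu_n+1})$ factor (resp. $(1+\hbar/x_{\bar\nu_n+1})$ in type B) becomes, at $\hbar=0$, the source of the parity factors. The type B statement is handled by the same argument applied to Theorem~7 instead of Theorem~\ref{action of type c}, the only difference being the Euler class at the short root $x_{\bar\nu_n+1}$ rather than $2x_{\bar\nu_n+1}$, which is irrelevant at $\hbar=0$.
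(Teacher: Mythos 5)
Your proposal is correct and follows essentially the same route as the paper: send the generators to the $\hbar=0$ specializations of the geometric operators, invoke faithfulness of the action on $H^{G}_{*}(Fl_{2n})$, treat the untwisted relations as inherited/straightforward, and verify the parity-twisted relations (the $\delta_{r,\mathrm{even}}$ factors and the triple commutators at $i=n$) by an explicit computation on the fiber after reducing to $n=1$. The paper's proof is exactly this, with the brute-force $n=1$ computation of $[f_{1,r_1},[f_{1,r_2},e_{1,s}]]$ carried out in full; your outline identifies the same key computation and, if anything, is more explicit than the paper about which relations do not follow from $\widetilde{Y}$ at $\hbar=0$.
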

\begin{proof}
We only prove the type C case, the type B case is similar.  we still consider the faithful representation of $H^{Sp(2d)}(Z)$ on $H^{Sp(2d)}(Fl_{2n})$.

We only need to prove the last two relations in the definition of current algebra, the others are straightforward. It suffices to prove the case $n = 1$.
 Using the same notation as  Theorem \ref{action of type c},
we have
\begin{align*}
 f_{1,r}(Q\otimes e_i)
 = &\sum\limits_{l = 1}^{1+i} (l,1+i)x_{1+i}^r Q\otimes e_{i+1}\\
 =&\sum\limits_{l = 1}^{1+i}x_{l}^r(l,i+1)Q \otimes e_{i+1},\\
 e_{1,r}(Q\otimes e_i)
 =& \sum\limits_{k = i}^d \mathbb{Z}_{2,k}\times (k,i)x_i^r\otimes e_{i-1}\\
 =&\sum\limits_{k = i}^d x_k^r (k,i)Q \otimes e_{i-1}
    +\sum\limits_{k = i}^d(-x_k)^r[1]_k(k,i)Q \otimes e_{i-1} .
 \end{align*}

We now show the penultimate identity. By a direct calculation, we have
 \begin{align*}
 &\frac{f_{1,r_1}f_{1,r_2}e_{1,s}}{2}(Q\otimes e_i)\\
 = &\sum_{l = 1}^{i}\sum_{k = i+1}^{d} (x_{i+1}^{r_1}x_l^{r_2}x_k^s + x_{i+1}^{r_2}x_l^{r_1}x_k^s)(l,k)Q\otimes e_{i+1}\\
  &+\sum_{l = 1}^{i}(x_{i+1}^{r_1}x_{l}^{r_2+s} + x_{i+1}^{r_2}x_{l}^{r_1+s})Q \otimes e_{i+1}\\
  &+ \sum_{l = 1}^{i}\sum_{k = i+1}^{d}( x_{i+1}^{r_1}x_{l}^{r_2}(-x_{k})^s + x_{i+1}^{r_2}x_{l}^{r_1}(-x_{k})^s)(l,k)[1]_lQ\otimes e_{i+1}\\
  &+\sum_{l = 1}^{i}(x_{i+1}^{r_1}x_{l}^{r_2}(-x_l)^s + x_{i+1}^{r_2}x_{l}^{r_1}(-x_l)^s)[1]_lQ \otimes e_{i+1}\\
   &+ \sum_{m=1}^i\sum_{l = 1,l \neq m}^{i}\sum_{k = i+2}^{d}x_m^{r_1}x_l^{r_2}x_k^s(m,i+1)(k,l)Q\otimes e_{i+1}\\
   &+ \sum_{m=1}^i\sum_{l = 1,l\neq m}^{i}( x_m^{r_1}x_l^{r_2}x_m^s + x_m^{r_2}x_l^{r_1}x_m^s)(l,i+1)Q\otimes e_{i+1}\\
  &+ \sum_{m=1}^i\sum_{l = 1,l \neq m}^{i}\sum_{k = i+2}^{d} x_m^{r_1}x_l^{r_2}(-x_k)^s(m,i+1)(k,l)[1]_{l}Q\otimes e_{i+1}\\
  &+ \sum_{m=1}^i\sum_{l = 1,l\neq m}^{i} (x_m^{r_1}x_l^{r_2}(-x_m)^s + x_m^{r_2}x_l^{r_1}(-x_m)^s )(l,i+1)[1]_mQ\otimes e_{i+1},
 \end{align*}

 \begin{align*}
 &e_{1,s}f_{1,r_2}e_{1,r_1}(Q\otimes e_i)\\
 =&\sum_{m=1}^i\sum_{l=1,l\neq m}^ix_{i+2}^sx_m^{r_2}x_{l}^{r_1}(m,i+2)(i+1,l)Q\otimes e_{i+1}\\
 &+\sum_{m=1}^ix_{i+2}^s(x_{m}^{r_1}x_{i+1}^{r_2} + x_m^{r_2}x_{i+1}^{r_1})(m,i+2)Q \otimes e_{i+1}\\
  &+\sum_{m=1}^{i+1}x_{i+2}^s(x_{m}^{r_1}x_{i+2}^{r_2} + x_m^{r_2}x_{i+2}^{r_1})(m,i+1)Q \otimes e_{i+1}\\
 &+\sum_{k = i+3}^d\sum_{m=1}^i\sum_{l=1,l \neq m}^ix_k^sx_m^{r_2}x_l^{r_1}(k,m)(i+1,l)Q \otimes e_{i+1}\\
 & + \sum_{k=i+3}^d\sum_{m=1}^ix_k^s(x_m^{r_1}x_{i+1}^{r_2} + x_m^{r_2}x_{i+1}^{r_1})(m,k)Q\\
 & + \sum_{k=i+3}^d\sum_{m=1}^{i+1}x_k^s(x_m^{r_1}x_{k}^{r_2} + x_m^{r_2}x_{k}^{r_1})(m,i+1)Q\\
 & + \sum_{m=1}^i\sum_{l=1,l\neq m}^i(- x_{i+2})^sx_m^{r_2}x_{l}^{r_1}(m,i+2)(i+1,l)[1]_mQ\otimes e_{i+1}\\
  & + \sum_{m=1}^i(-x_{i+2})^s(x_{m}^{r_1}x_{i+1}^{r_2} + x_m^{r_2}x_{i+1}^{r_1})(m,i+2)[1]_mQ \otimes e_{i+1}\\
  & + \sum_{m=1}^{i+1}(-x_{i+2})^s(x_{m}^{r_1}(-x_{i+2})^{r_2} + x_m^{r_2}(-x_{i+2})^{r_1})(m,i+1)Q \otimes e_{i+1}\\
  & + \sum_{k = i+3}^d\sum_{m=1}^i\sum_{l=1,l \neq m}^i(-x_k)^sx_m^{r_2}x_l^{r_1}(k,m)(i+1,l)[1]_mQ \otimes e_{i+1}\\
  & + \sum_{k=i+3}^d\sum_{m=1}^i(-x_k)^s(x_m^{r_1}x_{i+1}^{r_2} + x_m^{r_2}x_{i+1}^{r_1})(m,k)[1]_mQ\\
  & + \sum_{k=i+3}^d\sum_{m=1}^{i+1}(-x_k)^s(x_m^{r_1}(-x_{k})^{r_2} + x_m^{r_2}(-x_{k})^{r_1})(m,i+1)Q.
 \end{align*}

 \begin{align*}
&(f_{1,r_1}e_{1,s}f_{1,r_2}+f_{1,r_2}e_{1,s}f_{1,r_1})(Q\otimes e_{i})\\
=&\sum_{l=1}^i (x_{i+1}^{r_1+s}x_l^{r_2} + x_{i+1}^{r_2+s}x_l^{r_1})(l,i+1)Q \otimes e_{i+1}\\
& + \sum_{k= i+2}^d\sum_{l=1}^i (x_{i+1}^{r_1}x_k^sx_l^{r_2} + x_{i+1}^{r_2}x_k^sx_l^{r_1})(l,k)Q \otimes e_{i+1}\\
&+\sum_{k=i+1}^d (x_{i+1}^{r_1}x_{k}^{r_2+s} + x_{i+1}^{r_2}x_{k}^{r_1+s} )Q \otimes e_{i+1}\\
& + \sum_{m=1}^i\sum_{k= i+2}^d\sum_{l=1,l \neq m}^i (x_m^{r_1}x_k^sx_l^{r_2} + x_m^{r_2}x_k^sx_l^{r_1} )(m,i+1)(l,k)Q \otimes e_{i+1}\\
& + \sum_{m=1}^i\sum_{k= i+2}^d ( x_m^{r_1}x_k^sx_{i+1}^{r_2} + x_m^{r_2}x_k^sx_{i+1}^{r_1})(m,k)Q \otimes e_{i+1}\\
& + \sum_{m=1}^i\sum_{l= 1,l \neq m}^i(x_m^{r_1+s}x_l^{r_2} + x_m^{r_2+s}x_l^{r_1} )(l,i+1)Q \otimes e_{i+1}\\
& + \sum_{m=1}^i(x_m^{r_1+s}x_{i+1}^{r_2} + x_m^{r_2+s}x_{i+1}^{r_1})Q \otimes e_{i+1}\\
& + \sum_{m=1}^i\sum_{k= i+2}^d (x_m^{r_1}x_k^{r_2+s} + x_m^{r_2}x_k^{r_1+s})(m,i+1)Q \otimes e_{i+1}\\
& + \sum_{m=1}^i(x_{m}^{r_{1}+r_{2}+s} + x^{r_{1}+r_{2}+s})(m,i+1)Q \otimes e_{i+1}\\
& + \sum_{k = i+1}^d \sum_{l= 1}^i( x_{i+1}^{r_1}(-x_k)^sx_l^{r_2} +  x_{i+1}^{r_1}(-x_k)^sx_l^{r_2})(l,k)[1]_lQ \otimes e_{i+1}\\
 & + \sum_{k = i+1}^d (x_{i+1}^{r_1}(-x_k)^{r_2+s} + x_{i+1}^{r_2}(-x_k)^{r_1+s} )Q \otimes e_{i+1}\\
 & + \sum_{m = 1}^i \sum_{k = i+2}^d\sum_{l= 1, l \neq m}^i( x_m^{r_1}(-x_k)^sx_l^{r_2} + x_m^{r_2}(-x_k)^sx_l^{r_1})(m,i+1)(k,l)[1]_lQ \otimes e_{i+1}\\
 & + \sum_{m = 1}^i \sum_{k = i+2}^d (x_m^{r_1}(-x_k)^sx_{i+1}^{r_2} + x_m^{r_2}(-x_k)^sx_{i+1}^{r_1})(k,m)[1]_mQ \otimes e_{i+1}\\
& + \sum_{m = 1}^i \sum_{l= 1, l \neq m}^i (x_m^{r_1}(-x_m)^sx_l^{r_2} + x_m^{r_2}(-x_m)^sx_l^{r_1})(i+1,l)[1]_mQ \otimes e_{i+1}\\
& + \sum_{m = 1}^i  (x_m^{r_1}(-x_m)^sx_{i+1}^{r_2} + x_m^{r_2}(-x_m)^sx_{i+1}^{r_1})[1]_mQ \otimes e_{i+1}\\
& + \sum_{m = 1}^i  \sum_{k = i+2}^d (x_m^{r_1}(-x_k)^s(-x_k)^{r_2} + x_m^{r_2}(-x_k)^s(-x_k)^{r_1})(m,i+1)Q \otimes e_{i+1}\\
& + \sum_{m = 1}^i   (x_m^{r_1}(-x_m)^s(-x_m)^{r_2} + x_m^{r_2}(-x_m)^s(-x_m)^{r_1})(m,i+1)Q \otimes e_{i+1}\\
 \end{align*}
Therefore, we have
 \begin{align*}
&e_{1,r_1}e_{1,r_2}f_{1,s} + f_{1,s}e_{1,r_2}e_{1,r_1} - e_{1,r_1}f_{1,s}e_{1,r_2} - e_{1,r_2}f_{1,s}e_{1,r_1}\\
=&A Q\otimes e_{i+1} + \sum_{l=1}^iB_l (l,i+1)Q \otimes e_{i+1} \\
&+ \sum_{l=1}^i\sum_{k=i+2}^d C_{kl}(k,l)Q\otimes e_{i+1} + \sum_{m = 1}^i\sum_{l = 1,l < m}^i\sum_{k=i+2}^dD_{klm}(m,i+1)(k,l)Q\otimes e_{i+1} \\
&+ \sum_{l=1}^i\sum_{k=i+2}^d E_{kl}(k,l)[r]_lQ \otimes e_{i+1} +  \sum_{m = 1}^i\sum_{l = 1,l < m}^i\sum_{k=i+2}^dF_{klm}(m,i+1)(k,l)[1]_lQ\otimes e_{i+1}.
 \end{align*}
 By a directly computation, we have
 \begin{align*}
A=&  -2(\delta_{r_1 + s,even}+ \delta_{r_2 + s,even})x_{i+1}^{r_1 +r_2 + s},\\
B_l =& -2(\delta_{r_1 + s,even}+ \delta_{r_2 + s,even}) x_{l}^{r_1 +r_2 + s},\\
C_{kl}=&D_{klm} = E_{kl}=F_{klm}=0.
 \end{align*}

 That is,
 $$f_{1,r_1}f_{1,r_2}e_{1,s} + e_{1,s}f_{1,r_2}f_{1,r_1} - f_{1,r_1}e_{1,s}f_{1,r_2} - f_{1,r_2}e_{1,s}f_{1,r_1} =  -2(\delta_{r_1 + s,even}+ \delta_{r_2 + s,even})f_{1,r_1 +r_2 +s}.$$
 Similarly, we can prove another identity. The proposition follows.
\end{proof}
\bibliographystyle{alpha}
\bibliography{bib.tex}
\end{document}